\newtheorem{theorem}{Theorem}[section]
\newtheorem{proposition}{Proposition}[section]
\newtheorem{lemma}{Lemma}[section]
\theoremstyle{definition}
\numberwithin{equation}{section}
\numberwithin{theorem}{section}
\numberwithin{lemma}{section}
\numberwithin{corollary}{section}
\numberwithin{proposition}{section}
\newcommand{\wC}{\widetilde{C}}
\newcommand{\wD}{\widetilde{D}}
\newcommand{\wP}{\widetilde{P}}
\newcommand{\wU}{\widetilde{U}}
\begin{document}

\title[Approximation by Modified Goodman-Sharma Operators]
      {Higher Order Approximation of Functions \\ by Modified Goodman-Sharma Operators}

\author[I. Gadjev]{Ivan Gadjev$^1$}
\address{$^1$\,Faculty of Mathematics and Informatics, Sofia University St. Kliment Ohridski, \newline
         \indent 5, J. Bourchier Blvd, 1164 Sofia, Bulgaria}
\email{gadjev@fmi.uni-sofia.bg}

\author[P. Parvanov]{Parvan Parvanov$^2$}
\address{$^2$\,Faculty of Mathematics and Informatics, Sofia University St. Kliment Ohridski, \newline
         \indent 5, J. Bourchier Blvd, 1164 Sofia, Bulgaria}
\email{pparvan@fmi.uni-sofia.bg}

\author[R. Uluchev]{Rumen Uluchev$^3$}
\address{$^3$\,Faculty of Mathematics and Informatics, Sofia University St. Kliment Ohridski, \newline
         \indent 5, J. Bourchier Blvd, 1164 Sofia, Bulgaria}
\email{rumenu@fmi.uni-sofia.bg}

\thanks{$^{1,2,3}$\,This study is financed by the European Union-NextGenerationEU, through the
        National Recovery and Resilience Plan of the Republic of Bulgaria, project No
        BG-RRP-2.004-0008.}

\keywords{Bernstein-Durrmeyer operator, Goodman-Sharma operator, Direct theorem,
          Strong converse theorem, K-functional.}

\subjclass{41A35, 41A10, 41A25, 41A27, 41A17.}

\date{}

\begin{abstract}
  Here we study the approximation properties of a modified Goodman-Sharma operator recently
  considered by Acu and Agrawal in~\cite{AcAg2019}. This operator is linear but not positive.
  It has the advantage of a higher order of approximation of functions compared with the
  Goodman-Sharma operator. We prove direct and strong converse theorems in terms of a related
  K-functional.
\end{abstract}

\maketitle


\bigskip
\section{Introduction}

In 1987, W. Chen and idenpendently T.\,N.\,T. Goodman and A. Sharma presented at conferences
in China and Bulgaria, respectively a new modification of the classical Bernstein operators.
For $n\in \mathbb{N}$ and functions $f(x)\in C[0,1]$ they introduce the linear operator
(see \cite{Ch1987} and \cite{GoSh1988,GoSh1991}):
\begin{equation} \label{eq:1.1}
  \ U_n(f,x) = f(0)P_{n,0}(x)
               + \sum_{k=1}^{n-1} \!\Big(\int_0^1 \!(n-1)P_{n-2,k-1}(t)f(t)\,dt\Big)P_{n,k}(x)
               + f(1)P_{n,n}(x),
\end{equation}
where
\begin{equation} \label{eq:1.2}
  P_{n,k}(x) = \binom{n}{k}x^k(1-x)^{n-k}, \qquad k=0,\ldots,n.
\end{equation}
Operators of this kind were investigated by many authors (see \cite{PaPo1994}, \cite{BeGoKaTa2002},
\cite{Pal2007}, \cite{IvPa2009}, \cite{GoPa2010-1,GoPa2010-2}, \cite{AcRa2016}, etc.) and are
generally known as genuine Bernstein-Durrmeyer operators. Note that the operators in \eqref{eq:1.1} are
actually a limit case of Bernstein type operators with Jacobi weights studied by Berens and
Xu~\cite{BeXu1991}.

If we set
$$
  u_{n,k}(f) = \begin{cases}
                 f(0), & k=0, \\
                 (n-1)\int_0^1 P_{n-2,k-1}(t)f(t)\,dt, & k=1,\ldots,n-1, \\
                 f(1), & k=n,
               \end{cases}
$$
the operators defined in \eqref{eq:1.1} take the form
$$
  U_n(f,x) = \sum_{k=0}^n u_{n,k}(f)P_{n,k}(x) \qquad\text{or}\qquad
  U_n f = \sum_{k=0}^n u_{n,k}(f)P_{n,k}.
$$

Let us denote, as usual, by
$$
  \varphi(x) = x(1-x)
$$
the weight function which is naturally connected to the second derivative of the Bernstein operator.
Also, we set
\begin{equation} \label{eq:1.3}
  \wD f(x) := \varphi(x)f''(x)
\end{equation}
and$$
  \wD^2 f := \wD \wD f, \qquad \wD^{\ell + 1} f := \wD \wD^{\ell} f, \qquad \ell=2,3\ldots\,.
$$

Recently, Acu and Agrawal \cite{AcAg2019} studied a family of  Bernstein-Durrmeyer operators, as they modify $U_n f$ by replacing the Bernstein basis polynomials $P_{n,k}$ with linear combinations
of  Bernstein basis polynomials of lower degree with coefficients which are polynomials of
appropriate degree. For special choice of the parameters, these operators lack the positivity but
have a higher than $O(n^{-1})$ order of approximation. For example, Acu and Agrawal considered
operators with $O(n^{-2})$ and $O(n^{-3})$ rate of approximation, see \cite[Section~3]{AcAg2019}.

The results presented in \cite{AcAg2019} inspired the authors of the current paper to explore in
more depth the operators explicitly defined by
\begin{equation} \label{eq:1.4}
  \wU_n(f,x) = \sum_{k=0}^n u_{n,k}(f)\wP_{n,k}(x) , \qquad x\in[0,1],
\end{equation}
where
\begin{equation} \label{eq:1.5}
    \wP_{n,k}(x) = P_{n,k}(x) - \frac{1}{n}\,\wD P_{n,k}(x).
\end{equation}
By defining an appropriate K-functional, we prove direct and strong converse inequality of type B in  terminology of \cite{DiIv1993}. In order to state our main result, we need some definitions.

Let $L_{\infty}[0,1]$ be the space of all Lebesgue measurable and essentially bounded functions in
$[0,1]$ and $AC_{loc}(0,1)$ consists of the functions absolutely continuous in any subinterval
$[a,b]\subset(0,1)$. Let us set
$$
  W^2(\varphi)[0,1]  := \big\{g\,:\,g,g'\in AC_{loc}(0,1), \ \wD g\in L_{\infty}[0,1]\big\}.
$$
By $W^2_0(\varphi)[0,1]$  we denote the subspace of $W^2(\varphi)[0,1]$  of functions $g$ satisfying the additional boundary conditions

\begin{equation*}
    \lim_{x\to 0^{+}}\wD g = 0, \qquad \lim_{x\to 1^{-}}\wD g = 0.
\end{equation*}

Henceforth, by $\|\cdot\|$ we mean the uniform norm on the interval $[0,1]$. For functions
$f\in C[0,1]$ and $t>0$ we define the K-functional
\begin{equation} \label{eq:1.6}
  K(f,t) := \inf \big\{\|f-g\|+t\|\wD^2 g\|\,:\,g\,\in W^2_0(\varphi)[0,1],\,\wD g\in W^2(\varphi)[0,1]\big\}.
\end{equation}

Here we investigate the error of approximation of  functions $f\in C[0,1]$ by the modified
Goodman-Sharma operator \eqref{eq:1.4}. Our main results read as follows.

\begin{theorem} \label{th:1.1}
  If $\,n\in\mathbb{N}$, $n\ge 2$, and $f\in C[0,1]$, then
  $$
    \big\|\wU_n f-f\big\| \le (1+\sqrt{3})\,K\Big(f,\frac{1}{n^2}\Big).
  $$
\end{theorem}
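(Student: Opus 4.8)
The plan is to estimate $\|\wU_n f - f\|$ by the standard two-term split through an arbitrary competitor $g$ in the class over which the $K$-functional is taken. For such a $g$ write
$$
  \wU_n f - f = (\wU_n f - \wU_n g) + (\wU_n g - g) + (g - f),
$$
so that
$$
  \|\wU_n f - f\| \le \|\wU_n\|\,\|f-g\| + \|\wU_n g - g\| + \|g-f\|.
$$
Thus two things must be shown: first, a uniform bound $\|\wU_n\| \le 1$ (or a bound that combines with the rest to give the constant $1+\sqrt3$), and second, the commutator-type estimate $\|\wU_n g - g\| \le \frac{c}{n^2}\,\|\wD^2 g\|$ with $c$ chosen so that, after optimizing over $g$, the two contributions collapse to $(1+\sqrt3)K(f,1/n^2)$. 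Working backwards from the target constant: if $\|\wU_n\|\le 1$ and $\|\wU_n g - g\| \le \frac{A}{n^2}\|\wD^2 g\|$, then $\|\wU_n f - f\| \le \|f-g\| + \frac{A}{n^2}\|\wD^2 g\|$, and to pull out $K(f,1/n^2)=\inf(\|f-g\|+\frac1{n^2}\|\wD^2 g\|)$ with multiplier $1+\sqrt3$ one actually wants the sharper inequality $\|\wU_n f - f\| \le \|f-g\| + \frac{1}{n^2}\|\wD^2 g\| + (\text{something})\cdot$; most cleanly one proves $\|\wU_n g-g\|\le \frac{A}{n^2}\|\wD^2 g\|$ and $\|\wU_n(f-g)\|\le B\|f-g\|$ and then chooses the near-optimal $g$, getting the constant $\max(B,1)+A$ after balancing. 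The appearance of $\sqrt3$ strongly suggests that the genuine bound is $\|\wU_n f-f\|\le \|f-g\| + \frac{1}{n^2}\|\wD^2 g\| + (\sqrt3-1)\bigl(\text{cross terms bounded by } \|f-g\|\text{ or by }\tfrac1{n^2}\|\wD^2g\|\bigr)$; in practice one arranges $\|\wU_n h\|\le \|h\|$ together with $\|\wU_n g - g\|\le \frac{3}{n^2}\|\wD^2 g\|$ and then uses $a+3b\le (1+\sqrt3)\max(a,\ldots)$ — but the honest route is simply: take any admissible $g$, bound $\|\wU_n f-f\|\le \|f-g\|+\|\wU_n g-g\|$ using $\|\wU_n\|\le1$, insert $\|\wU_n g-g\|\le \frac{c}{n^2}\|\wD^2 g\|$, and conclude $\le (1+\sqrt{c})^{?}$ — so the precise bookkeeping that produces exactly $1+\sqrt3$ I will pin down once the constant $c$ in the smoothness estimate is computed.

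The first ingredient, $\|\wU_n\|\le 1$, should follow from the structure of $\wU_n$ in \eqref{eq:1.4}–\eqref{eq:1.5}. The functionals $u_{n,k}$ are positive and satisfy $\sum_k u_{n,k}(1)=1$ with $|u_{n,k}(f)|\le \|f\|\,u_{n,k}(1)$, so $\bigl|\wU_n(f,x)\bigr|\le \|f\|\sum_{k=0}^n |\wP_{n,k}(x)|$ and it suffices to show $\sum_{k=0}^n |\wP_{n,k}(x)|\le 1$, or at least $\sum_k \wP_{n,k}(x)=1$ together with $\wP_{n,k}\ge 0$ on $[0,1]$ — the latter is the content I expect to check directly from $\wP_{n,k}=P_{n,k}-\frac1n\wD P_{n,k}$ using $\wD P_{n,k}(x)=\varphi(x)P_{n,k}''(x)$ and the explicit second derivative of the Bernstein basis; a short computation gives $\wP_{n,k}$ as a nonnegative combination of $P_{n,k-1},P_{n,k},P_{n,k+1}$ (this is exactly the Acu–Agrawal rewriting), whence positivity and $\sum_k\wP_{n,k}=1$ are immediate, so in fact $\wU_n$ IS positive on this basis and $\|\wU_n\|=1$.

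The second and main ingredient is the Jackson-type estimate $\|\wU_n g - g\| \le \frac{c}{n^2}\|\wD^2 g\|$ for $g$ in the $K$-functional class. The natural tool is the eigenstructure of the genuine Bernstein–Durrmeyer operator $U_n$: it is classical (Goodman–Sharma, and used heavily in \cite{GoPa2010-1,GoPa2010-2}) that $U_n$ is diagonalizable with eigenfunctions and eigenvalues $\lambda_{n,k}$, and more relevantly that $\wD$ commutes with $U_n$ in the sense $U_n\wD = \wD U_{n}$-type relations on suitable subspaces, or that $U_n p - p = \sum (\text{low powers of }\tfrac1n)\wD p + \dots$ via the Voronovskaya expansion. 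Concretely I would first establish the operator identity relating $\wU_n$ to $U_n$: since $\wP_{n,k}=(I-\frac1n\wD)P_{n,k}$ and $\wD$ acts nicely, one gets $\wU_n f = U_n f - \frac1n \wD U_n f$ (this needs the fact that $\wD\sum_k u_{n,k}(f)P_{n,k} = \sum_k u_{n,k}(f)\wD P_{n,k}$, i.e. termwise differentiation, plus identifying $\sum u_{n,k}(f)P_{n,k}=U_nf$). Then $\wU_n g - g = (U_n g - g) - \frac1n\wD U_n g$. Expanding both $U_n g-g$ and $\wD U_n g$ to second order in $1/n$ — using the known Voronovskaya-type formula $U_n g - g = \frac1n\wD g + O(n^{-2})$ and its $\wD$-image — the $\frac1n\wD g$ terms cancel by the very design of $\wP_{n,k}$, leaving $\wU_n g - g = O(n^{-2})\wD^2 g$ with an explicit constant. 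The hard part will be making this expansion rigorous with a genuine $L_\infty$ remainder bound valid for all $g$ merely in $W^2_0(\varphi)$ with $\wD g\in W^2(\varphi)$ — rather than a pointwise asymptotic for smooth $g$; this is where the boundary conditions $\lim_{x\to0^+}\wD g=\lim_{x\to1^-}\wD g=0$ and the localization to $W^2_0(\varphi)$ are essential, and where I would either invoke the eigenfunction decomposition of $U_n$ (bounding the operator $\frac{n^2}{?}(\wU_n - I)$ on the span of eigenfunctions and comparing with $\wD^2$) or use an integral representation of the remainder in the Taylor expansion of $U_n$ together with known estimates on $U_n\wD^2 g$. Once the constant $c$ in $\|\wU_n g-g\|\le\frac{c}{n^2}\|\wD^2 g\|$ is identified, combining with $\|\wU_n\|\le1$ and optimizing over $g$ yields $\|\wU_n f-f\|\le(1+\sqrt{c})K(f,1/n^2)$, and the claim is that $c=3$, or that a slightly more careful split of the three terms $\|\wU_n(f-g)\|,\|\wU_n g-g\|,\|g-f\|$ produces precisely the factor $1+\sqrt3$.
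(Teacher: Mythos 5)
Your overall skeleton (split through an arbitrary admissible $g$, bound the operator norm, prove a Jackson-type estimate, take the infimum) is exactly the paper's strategy, but both of your ingredients go wrong. The first ingredient fails outright: $\wU_n$ is \emph{not} positive (the paper states this explicitly in the abstract), and the $\wP_{n,k}$ are not nonnegative. Writing out $\varphi P_{n,k}''=(k-1)(n-k+1)P_{n,k-1}-2k(n-k)P_{n,k}+(k+1)(n-k-1)P_{n,k+1}$, the rewriting of $\wP_{n,k}=P_{n,k}-\frac1n\varphi P_{n,k}''$ as a combination of $P_{n,k-1},P_{n,k},P_{n,k+1}$ has \emph{negative} coefficients on the two neighbours, so your claimed positivity and $\|\wU_n\|=1$ are false. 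Equivalently, $\wP_{n,k}=(1-\frac1n T_{n,k})P_{n,k}$ with $T_{n,k}(x)=k(k-1)\frac{1-x}{x}-2k(n-k)+(n-k)(n-k-1)\frac{x}{1-x}$, and $T_{n,k}$ exceeds $n$ on part of $(0,1)$. The paper instead bounds $\sum_k|1-\frac1n T_{n,k}(x)|P_{n,k}(x)$ by Cauchy--Schwarz and the moment identity $\sum_k(1-\frac1n T_{n,k})^2P_{n,k}=3-\frac2n$, giving $\|\wU_n\|\le\sqrt{3}$. This is where the $\sqrt3$ in the theorem actually comes from --- not, as you guessed, from a constant $c=3$ in the Jackson estimate.

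The second ingredient is an admitted gap: you correctly derive $\wU_n f=U_n f-\frac1n\wD U_n f=U_n(f-\frac1n\wD f)$, but your plan to control $\wU_n g-g$ by a Voronovskaya expansion with an $L_\infty$ remainder is left as ``the hard part,'' which is precisely the content of the lemma you need. The paper's route is elementary and avoids any expansion: the known identity $U_k f-U_{k+1}f=\frac1{k(k+1)}\wD U_{k+1}f$ combined with $\wD U_n=U_n\wD$ gives the telescoping formula $\wU_k f-\wU_{k+1}f=-\frac1{k^2(k+1)}\wD U_{k+1}\wD f$, whence $\wU_n g-g=-\sum_{k\ge n}\frac1{k^2(k+1)}\wD U_{k+1}\wD g$; the contraction $\|\wD U_m h\|\le\|\wD h\|$ and $\sum_{k\ge n}\frac1{k^2(k+1)}\le\frac1{n^2}$ then give $\|\wU_n g-g\|\le\frac1{n^2}\|\wD^2 g\|$ with constant exactly $1$. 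With $\|\wU_n\|\le\sqrt3$ and this Jackson constant $1$, the three-term split gives $(1+\sqrt3)\|f-g\|+\frac1{n^2}\|\wD^2 g\|\le(1+\sqrt3)(\|f-g\|+\frac1{n^2}\|\wD^2 g\|)$, and the theorem follows. As written, your argument does not close.
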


\begin{theorem} \label{th:1.2}
  For every function $f\in C[0,1]$ and $n\in\mathbb{N}$, $n\ge 2$, there exist constants $C,L>0$
  such that
  $$
    K\Big(f,\frac{1}{n^2}\Big) \le
    C\,\frac{\ell^2}{n^2}\big(\big\|\wU_n f - f \big\| + \big\|\wU_{\ell} f - f) \big\| \big).
  $$
  for all $\ell\ge Ln$.
\end{theorem}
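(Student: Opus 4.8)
The strong converse inequality of type B, in the Ditzian–Ivanov framework, requires showing that the $K$-functional at scale $1/n^2$ is controlled by the error of approximation at two scales $n$ and $\ell$, provided $\ell$ is large enough relative to $n$. The standard mechanism is to exhibit a good comparison function $g$ in the class defining $K(f,1/n^2)$, built out of the operators themselves, and to estimate both $\|f-g\|$ and $(1/n^2)\|\wD^2 g\|$. The plan is to take $g = \wU_\ell f$ (or a suitable averaged/iterated variant) as the competitor, which immediately gives $\|f-g\| = \|\wU_\ell f - f\|$; the whole difficulty is then the Bernstein-type (inverse) estimate for $\|\wD^2 \wU_\ell f\|$.

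Before that, one needs the spectral/commutation structure of $\wU_n$. Since $\wP_{n,k} = P_{n,k} - \tfrac1n \wD P_{n,k}$ and the $u_{n,k}$ are the genuine Bernstein–Durrmeyer functionals, I expect $\wU_n$ to be simultaneously diagonalized (in an appropriate polynomial basis, e.g. shifted Legendre-type polynomials that diagonalize $U_n$ and $\wD$) with eigenvalues of the form $\lambda_{n,j} = \mu_{n,j}\bigl(1 + \tfrac{j(j-1)}{n}\bigr)$ where $\mu_{n,j}$ are the eigenvalues of $U_n$; here one uses that $\wD$ acts on the eigenfunction of degree $j$ by multiplication by $-j(j-1)$ up to lower-order terms. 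The key algebraic facts I would establish first are: (i) $\wU_n$ reproduces linear functions; (ii) $\wD \wU_n f = \wU_n^{[1]} \wD f$ for some related operator $\wU_n^{[1]}$ acting nicely, i.e. a commutation identity $\wD \wU_n = A_n \wD$ on smooth functions, and iterating, $\wD^2 \wU_n = B_n \wD^2$; (iii) uniform boundedness $\|\wU_n\| \le C$ and the smoothing estimate $\|\wD^2 \wU_n f\| \le C n^2 \|f\|$. Fact (iii) is the operator-norm Bernstein inequality and is proved by the eigenvalue bounds together with boundedness of the Legendre expansion in the relevant norm, or more robustly by direct estimation of $\wD^2 \wP_{n,k}$ and the known bounds $|u_{n,k}(f)| \le \|f\|$.

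With these in hand, the argument runs as follows. Write $\wD^2 \wU_\ell f = \wD^2 \wU_\ell(f - \wU_n f) + \wD^2 \wU_\ell \wU_n f$. For the first term use the Bernstein inequality $\|\wD^2 \wU_\ell h\| \le C\ell^2 \|h\|$ with $h = f - \wU_n f$, giving $C\ell^2 \|\wU_n f - f\|$. For the second term, use the commutation identity to pull $\wD^2$ through onto $\wU_n f$, which is a polynomial, and exploit that $\wU_n f$ is smooth with $\|\wD^2 \wU_n f\|$ already controlled, together with a gain coming from $\wU_\ell$ being close to the identity on smooth functions: one expects $\|\wD^2 \wU_\ell \wU_n f\| \le C \|\wD^2 \wU_n f\| \le C n^2 (\|\wU_n f - f\| + \text{something})$, using a Voronovskaya-type or direct estimate $\|\wD^2(\wU_\ell g - g)\|$ small. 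Combining, $(1/n^2)\|\wD^2 \wU_\ell f\| \le C(\ell^2/n^2)\|\wU_n f - f\| + C\|\wU_n f - f\|$, and since $\ell \ge Ln$ the second term is absorbed into the first; adding $\|f - \wU_\ell f\|$ completes the bound, with the $\ell^2/n^2$ factor as stated.

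The main obstacle I anticipate is establishing the clean commutation relation $\wD^2 \wU_n = B_n \wD^2$ together with the uniform bound $\|B_n\| \le C$ (independently of $n$); because $\wU_n$ is not positive, the usual positive-operator tricks (e.g. monotonicity, Cauchy–Schwarz on the kernel) are unavailable, so one must work through the explicit action on the eigenbasis and control the two-sided behaviour of the eigenvalue ratios $\lambda_{\ell,j}/\lambda_{n,j}$ uniformly in $j$, and separately handle the endpoint boundary terms (the $k=0$ and $k=n$ atoms $f(0)P_{n,0}$, $f(1)P_{n,n}$) which do not fit the interior Durrmeyer pattern and require that $\wD P_{n,0}$, $\wD P_{n,n}$ vanish at the appropriate endpoints so that $\wU_n f$ genuinely lands in $W^2_0(\varphi)[0,1]$. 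A secondary technical point is verifying that $\wU_\ell f$ actually belongs to the admissible class in \eqref{eq:1.6}, i.e. that $\wD \wU_\ell f \in W^2(\varphi)[0,1]$ and the boundary conditions hold; this should follow from $\wU_\ell f$ being a polynomial of degree $\le \ell$ combined with the factor $\varphi$ in $\wD$, but it must be checked.
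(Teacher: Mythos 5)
Your overall framing (Ditzian--Ivanov type B, a Bernstein-type inverse inequality $\|\wD\wU_n f\|\le \wC n\|f\|$, and commutativity $\wD\wU_n=\wU_n\wD$ on smooth functions) matches the paper's strategy, but the core of your argument has a genuine gap, and it is precisely at the step you describe most vaguely. Your decomposition $\wD^2\wU_\ell f=\wD^2\wU_\ell(f-\wU_n f)+\wD^2\wU_\ell\wU_n f$ handles the first term correctly, but for the second term you propose $\|\wD^2\wU_\ell\wU_n f\|\le C\|\wD^2\wU_n f\|\le Cn^2(\|\wU_n f-f\|+\text{something})$. The inner inequality $\|\wD^2\wU_n f\|\lesssim n^2\|\wU_n f-f\|$ \emph{is} the converse theorem (up to the second scale), so as written the argument is circular; the ``something'' you leave unspecified is exactly where all the work lies, and no ``direct estimate'' closes it. The missing ingredient is a Voronovskaya-type inequality with an explicit second-order remainder: for smooth $g$, $\|\wU_\ell g-g+\lambda(\ell)\wD^2 g\|\le\theta(\ell)\|\wD^3 g\|$ with $\lambda(\ell)\ge\tfrac1{2\ell^2}$ and $\theta(\ell)\le\tfrac4{9\ell^3}$ (the paper's Lemma~4.1 and Proposition~2.4). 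Applying this at the \emph{large} scale $\ell$ to the smooth competitor gives $\lambda(\ell)\|\wD^2 g\|\le\theta(\ell)\|\wD^3 g\|+\|\wU_\ell g-g\|$, and the $\wD^3$ term is converted back into $\wD^2\,g$ via one application of the Bernstein inequality and then \emph{absorbed} into the left-hand side, which is possible exactly when $\wC n\,\theta(\ell)\le\tfrac12\lambda(\ell)$, i.e.\ $\ell\ge Ln$. This absorption is where the threshold $L$ and the factor $\ell^2/n^2$ actually come from; your sketch has no mechanism playing this role.

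A secondary structural point: the paper's competitor is $g=\wU_n^3 f$, not $\wU_\ell f$. The triple iteration is not cosmetic: the Voronovskaya remainder involves $\wD^3 g$, and writing $\wD^3\wU_n^3 f=\wD\wU_n\bigl(\wD^2\wU_n^2 f\bigr)$ lets each derivative be paid for by one factor of $\wC n$ through the Bernstein lemma, after splitting $\wD^2\wU_n^2 f=\wD^2\wU_n^2(f-\wU_n f)+\wD^2\wU_n^3 f$ so that the last piece is again the quantity being absorbed. With a single application of $\wU_n$ (or with $\wU_\ell f$) you cannot run this bookkeeping. Your remarks on spectral diagonalization and eigenvalue ratios are not needed: the paper gets $\wD\wU_n=\wU_n\wD$ and $\|\wU_\ell\wD^2 g\|\le\sqrt3\,\|\wD^2 g\|$ directly from the known commutation for $U_n$ and the operator-norm bound $\|\wU_n\|\le\sqrt3$, with no recourse to the eigenbasis.
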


\textit{Remark}. Another way to state Theorem~\ref{th:1.1} and Theorem~\ref{th:1.2} is: there exists a natural number $k$ such that
$$
  K\Big(f,\frac{1}{n^2}\Big) \sim \big\|\wU_n f - f \big\| + \big\|\wU_{kn} f - f \big\|.
$$

The paper is organized as follows. In Section~1 state of the art is described. Preliminary and
auxiliary results are presented in Section~2. Section~3 includes an estimation of the norm of
the operator $\wU_n$, a Jackson type inequality and a proof of the direct inequality in
Theorem~\ref{th:1.1}. The last Section~4 is devoted to a converse result for the modified
Goodman-Sharma operator \eqref{eq:1.4}. Inequalities of the Voronovskaya type and Bernstein type
for $\wU_n$ are proved using the differential operator~$\wD$, defined in \eqref{eq:1.3}.
Theorem~\ref{th:1.2} represents a strong converse inequality of Type B, according to Ditzian-Ivanov
classification in \cite{DiIv1993}. Complete proof of the converse theorem is given.


\smallskip
\section{Preliminaries and Auxiliary Results}

By $B_n f$, $n\in \mathbb{N}$, we denote the Bernstein operators determined for functions $f$,
$$
  B_n(f,x) = \sum_{k=0}^n f\Big(\frac{k}{n}\Big)P_{n,k}(x), \qquad x\in[0,1],
$$
where $P_{n,k}$ are the Bernstein basis polynomials \eqref{eq:1.2}. The Bernstein operators central
moments play important role in many applications and they are defined by
$$
  \mu_{n,i}(x) = B_n\big((t-x)^i,x\big) = \sum_{k=0}^n \Big(\frac{k}{n}-x\Big)^i P_{n,k}(x), \qquad
  i=0,1,\ldots\,.
$$

We summarize some well known useful properties of the Bernstein polynomials. Further on we assume
$P_{n,k}:=0$ if $k<0$ or $k>n$.

\begin{proposition}[see, e.g. \cite{Lo1953}] \label{pr:2.1}
  (a) The following identities are valid:
      \begin{align} \label{eq:2.1}
        & \sum_{k=0}^n k P_{n,k}(x) = nx, \qquad \sum_{k=0}^n (n-k)P_{n,k}(x) = n(1-x), \\
        & \sum_{k=0}^n k(k-1)P_{n,k}(x) = n(n-1)x^2, \label{eq:2.2} \\
        & \sum_{k=0}^n (n-k)(n-k-1)P_{n,k}(x) = n(n-1)(1-x)^2, \label{eq:2.3} \\
        & P_{n,k}'(x) = n\big[P_{n-1,k-1}(x)-P_{n-1,k}(x)\big], \label{eq:2.4} \\
        & P_{n,k}''(x) = n(n-1)\big[P_{n-2,k-2}(x)-2P_{n-2,k-1}(x)+P_{n-2,k}(x)\big]. \label{eq:2.5}
      \end{align}

  (b) For the low-order moments $\mu_{n,i}(x)$ we have:
      \begin{align*}
        \mu_{n,0}(x) & = B_n\big((t-x)^0,x\big) = 1, \\
        \mu_{n,1}(x) & = B_n\big((t-x),x\big) = 0, \\
        \mu_{n,2}(x) & = B_n\big((t-x)^2,x\big) = \frac{\varphi(x)}{n}, \\
        \mu_{n,3}(x) & = B_n\big((t-x)^3,x\big) = \frac{(1-2x)\varphi(x)}{n^2}, \\
        \mu_{n,4}(x) & = B_n\big((t-x)^4,x\big) = \frac{3(n-2)\varphi^2(x)}{n^3} + \frac{\varphi(x)}{n^3}.
      \end{align*}
\end{proposition}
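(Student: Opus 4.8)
The plan is to derive everything in part (a) from the binomial theorem together with the two factorial identities $k\binom{n}{k} = n\binom{n-1}{k-1}$ and $(n-k)\binom{n}{k} = n\binom{n-1}{k}$, and then to obtain the moment formulas in part (b) from part (a) by means of a single recurrence for the central moments.

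First I would record the normalization $\sum_{k=0}^n P_{n,k}(x) = \big(x+(1-x)\big)^n = 1$. For \eqref{eq:2.1} I use $k\binom{n}{k} = n\binom{n-1}{k-1}$ to write $k P_{n,k}(x) = nx\,P_{n-1,k-1}(x)$ and sum, the surviving sum being a full Bernstein partition of unity of degree $n-1$; the companion identity for $\sum (n-k)P_{n,k}$ follows symmetrically from $(n-k)\binom{n}{k} = n\binom{n-1}{k}$, which gives $(n-k)P_{n,k}(x) = n(1-x)\,P_{n-1,k}(x)$. Iterating this device with $k(k-1)\binom{n}{k} = n(n-1)\binom{n-2}{k-2}$ and $(n-k)(n-k-1)\binom{n}{k} = n(n-1)\binom{n-2}{k}$ yields \eqref{eq:2.2} and \eqref{eq:2.3}. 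For \eqref{eq:2.4} I would differentiate $P_{n,k}$ by the product rule and reapply the same two factorial identities to recognize the two surviving terms as $n\,P_{n-1,k-1}$ and $n\,P_{n-1,k}$; \eqref{eq:2.5} then follows by applying \eqref{eq:2.4} twice and collecting terms.

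For part (b), the values $\mu_{n,0}$, $\mu_{n,1}$ and $\mu_{n,2}$ follow at once by expanding $(k/n-x)^i$ and substituting the power sums from part (a) (for $\mu_{n,2}$ one writes $\sum k^2 P_{n,k} = \sum k(k-1)P_{n,k} + \sum k P_{n,k}$). To reach the third and fourth moments cleanly I would first establish the pointwise identity $\varphi(x)\,P_{n,k}'(x) = n\,(k/n - x)\,P_{n,k}(x)$, obtained by logarithmic differentiation since $P_{n,k}'/P_{n,k} = k/x - (n-k)/(1-x) = (k-nx)/\varphi(x)$. Differentiating $\mu_{n,i}(x) = \sum_k (k/n-x)^i P_{n,k}(x)$ term by term under the finite sum and inserting this identity gives the recurrence
$$
  n\,\mu_{n,i+1}(x) = \varphi(x)\big[\mu_{n,i}'(x) + i\,\mu_{n,i-1}(x)\big].
$$
Feeding $\mu_{n,1}$ and $\mu_{n,2}$ into this relation produces $\mu_{n,3}$, and feeding $\mu_{n,2}$ and $\mu_{n,3}$ produces $\mu_{n,4}$.

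The computations are entirely elementary, so the only real care needed is the algebraic bookkeeping in the last step: simplifying $\mu_{n,3}'(x) + 3\mu_{n,2}(x)$ requires the substitution $(1-2x)^2 = 1 - 4\varphi(x)$ to separate the $\varphi$ and $\varphi^2$ contributions and to recover the coefficient $3(n-2)/n^3$. I expect this final simplification to be the main (though minor) obstacle; everything else is a direct application of the binomial and factorial identities.
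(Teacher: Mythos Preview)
Your proposal is correct; the paper itself does not supply a proof of Proposition~\ref{pr:2.1} but simply attributes these identities to Lorentz~\cite{Lo1953}, so there is no ``paper's own proof'' to compare against. Your derivation via the factorial identities and the moment recurrence $n\mu_{n,i+1}=\varphi(\mu_{n,i}'+i\mu_{n,i-1})$ is in fact the classical argument found in that reference.
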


The operators $U_n$, $\wU_n$ and the differential operator $\wD$ satisfy interesting properties.

\begin{proposition} \label{pr:2.2}
  If the operators $U_n$, $\wU_n$ and the differential operator $\wD$ are defined as in
  \eqref{eq:1.1}, \eqref{eq:1.4} and \eqref{eq:1.3}, respectively, then

  \smallskip
  (a) \ $\wD U_n f = U_n \wD f$ for   $f\in W^2_0(\varphi)[0,1]$;

  \smallskip
  (b) \ $\wU_n f = U_n\big(f-\frac{1}{n}\,\wD f\big)$ for   $f\in W^2_0(\varphi)[0,1]$;

  \smallskip
  (c) \ $\wD \wU_n f = \wU_n \wD f$ for   $f\in W^2_0(\varphi)[0,1]$;

  \smallskip
  (d) \ $U_n \wU_n f = \wU_n U_n f$ for   $f\in W^2_0(\varphi)[0,1]$;

  \smallskip
  (e) \ $\wU_m \wU_n f = \wU_n \wU_m f$ for   $f\in W^2_0(\varphi)[0,1]$;

  \smallskip
  (f) \ $\displaystyle \lim_{n\to \infty} \wU_n f = f$ for   $f\in W^2(\varphi)[0,1]$;

  \smallskip
  (g) \ $\displaystyle  \big\| \wD U_n f\big\|  \le \big\|\wD f\big\|$  for   $f\in W^2(\varphi)[0,1]$.
\end{proposition}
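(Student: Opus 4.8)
The engine behind all seven assertions is a closed form for $\wD$ applied to a Bernstein combination. Combining \eqref{eq:2.5} with the elementary identity $\varphi(x)P_{n-2,j}(x)=\frac{(j+1)(n-j-1)}{n(n-1)}\,P_{n,j+1}(x)$ gives
$$\wD P_{n,k}=(k-1)(n-k+1)P_{n,k-1}-2k(n-k)P_{n,k}+(k+1)(n-k-1)P_{n,k+1},$$
so that, for any coefficients $c_k$ (put $c_k:=0$ outside $0\le k\le n$),
$$\wD\Big(\sum_k c_kP_{n,k}\Big)=\sum_{j=0}^n j(n-j)\,\big(c_{j-1}-2c_j+c_{j+1}\big)P_{n,j}.$$
Taking $c_k=u_{n,k}(f)$ this describes $\wD U_nf$. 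The first real step is to recognize these coefficients: starting from $u_{n,j}(\wD f)=\frac{j(n-j)}{n}\int_0^1P_{n,j}(t)f''(t)\,dt$ (same $\varphi P_{n-2,j-1}=\frac{j(n-j)}{n(n-1)}P_{n,j}$ identity), I would integrate by parts twice, using \eqref{eq:2.5} for $P_{n,j}''$ and $\int_0^1P_{n-2,i}(t)\,dt=\frac1{n-1}$, to reach
$$j(n-j)\big(u_{n,j-1}(f)-2u_{n,j}(f)+u_{n,j+1}(f)\big)=u_{n,j}(\wD f),\qquad 1\le j\le n-1,$$
valid for every $f\in W^2(\varphi)[0,1]$, with the convention $u_{n,0}(f):=f(0)$, $u_{n,n}(f):=f(1)$. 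The two integrations by parts are legitimate because for such $f$ the derivative $f'$ grows at most logarithmically at $0$ and $1$ and $f$ itself extends continuously to $[0,1]$; the boundary terms that do not vanish outright (only those at $j=1$ and $j=n-1$) recombine exactly into the values $f(0)$ and $f(1)$ inside the second difference. Hence, for $f\in W^2(\varphi)[0,1]$,
$$\wD U_nf=\sum_{j=1}^{n-1}u_{n,j}(\wD f)\,P_{n,j},$$
which differs from $U_n\wD f$ precisely in the two missing end terms $j=0$ and $j=n$; those vanish exactly when $f\in W^2_0(\varphi)[0,1]$.

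Everything else is then short. For (a): when $f\in W^2_0(\varphi)[0,1]$ the end terms are $0$, so the sum above becomes the full $\sum_{j=0}^n u_{n,j}(\wD f)P_{n,j}=U_n\wD f$, i.e. $\wD U_nf=U_n\wD f$ — this is exactly where the boundary conditions defining $W^2_0(\varphi)$ are indispensable. For (g): $|u_{n,j}(\wD f)|\le(n-1)\int_0^1P_{n-2,j-1}(t)\,dt\,\|\wD f\|=\|\wD f\|$, and since $\sum_{j=1}^{n-1}P_{n,j}(x)\le1$ the identity above yields $\|\wD U_nf\|\le\|\wD f\|$. For (b): from \eqref{eq:1.4} and \eqref{eq:1.5}, $\wU_nf=U_nf-\tfrac1n\wD U_nf$ whenever $U_nf$ is defined, so substituting (a) gives $\wU_nf=U_nf-\tfrac1nU_n\wD f=U_n\big(f-\tfrac1n\wD f\big)$. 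For (c): $\wU_n\wD f=U_n\wD f-\tfrac1n\wD U_n\wD f$, and replacing $U_n\wD f$ by $\wD U_nf$ via (a) turns the right-hand side into $\wD\big(U_nf-\tfrac1n\wD U_nf\big)=\wD\wU_nf$ (all occurrences of $\wD$ now acting on polynomials). For (d): every polynomial, in particular $U_nf$, belongs to $W^2_0(\varphi)[0,1]$, so (a) applies to $U_nf$ and both $U_n\wU_nf$ and $\wU_nU_nf$ collapse to $U_n^2f-\tfrac1n\wD U_n^2f$. For (f): (g) gives $\|\wU_nf-f\|\le\|U_nf-f\|+\tfrac1n\|\wD U_nf\|\le\|U_nf-f\|+\tfrac1n\|\wD f\|$, and both terms tend to $0$ since $U_nf\to f$ uniformly on $C[0,1]$ (classical).

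Part (e) needs one classical extra ingredient. Iterating $\wU_nf=U_nf-\tfrac1n\wD U_nf$ and applying (a) to the polynomials that arise along the way, I would obtain
$$\wU_m\wU_nf=\Big(\mathrm{id}-\big(\tfrac1m+\tfrac1n\big)\wD+\tfrac1{mn}\wD^2\Big)U_mU_nf,$$
which is symmetric in $m$ and $n$ apart from the order of $U_m$ and $U_n$; hence (e) follows once $U_mU_n=U_nU_m$. This commutativity is standard for the genuine Bernstein--Durrmeyer operators: by (a) each $U_n$ commutes with $\wD$ on polynomials, and $\wD$ has one-dimensional eigenspaces on the polynomials except for its two-dimensional kernel (the linear functions, on which $U_n=\mathrm{id}$), so all $U_n$ are diagonalized by one $n$-independent polynomial eigenbasis; an approximation argument carries $U_mU_n=U_nU_m$ over to all of $C[0,1]$.

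I expect the only genuine work to lie in the master identity: justifying the two integrations by parts up to the endpoints for $f\in W^2(\varphi)[0,1]$ and checking that the surviving boundary terms (only at $j=1$ and $j=n-1$) reassemble into the stated second difference — this is the heart of the matter, and where the $W^2_0(\varphi)$ boundary conditions become essential. The passage from polynomials to $C[0,1]$ in $U_mU_n=U_nU_m$, used for (e), is the one other point that needs a short, routine argument.
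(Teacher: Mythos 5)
Your proposal is correct and follows the same logical skeleton as the paper: part (a) is the linchpin, (b)--(e) are formal algebra built on $\wU_n f=U_nf-\frac1n\wD U_nf$, and (f), (g) come from a representation of $\wD U_nf$. The real difference is that the paper outsources the hard kernel of the argument to Parvanov--Popov \cite{PaPo1994}: it cites their Lemma~4.2 for (a) and for the representation $\wD U_ng=\sum_{k=1}^{n-1}P_{n,k}\int_0^1(n-1)P_{n-2,k-1}\wD g$ used in (g), and their estimates $\|U_nf-f\|\le\frac1n\|\wD f\|$, $\|U_n\wD f\|\le\|\wD f\|$ for (f), whereas you re-derive the master identity $j(n-j)\big(u_{n,j-1}(f)-2u_{n,j}(f)+u_{n,j+1}(f)\big)=u_{n,j}(\wD f)$ by double integration by parts. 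Your version is self-contained and has the merit of showing exactly where the $W^2_0(\varphi)$ boundary conditions enter (the missing $j=0,n$ terms are $(\wD f)(0)$ and $(\wD f)(1)$); your (f) then runs on Korovkin convergence of $U_n$ instead of the quantitative $O(1/n)$ bound, which is equally valid. For (e), both you and the paper ultimately lean on $U_mU_n=U_nU_m$; the paper asserts it silently, while your eigenbasis sketch is essentially the right proof --- just note that to diagonalize $U_n$ on \emph{all} polynomials you also need $U_nq_k=0$ for eigenfunctions $q_k$ of $\wD$ of degree $k>n$, which follows from (a) since $U_nq_k\in\Pi_n$ would otherwise be a $\wD$-eigenfunction with eigenvalue $-k(k-1)$ not attained in $\Pi_n$; with that, density of polynomials and $\|U_n\|=1$ complete the approximation step. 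No gaps of substance.
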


\begin{proof}
For the proof of (a), see \cite[Lemma~4.2]{PaPo1994}.

We have
\begin{align*}
  \wU_{n} f & = \sum_{k=0}^n u_{n,k}(f)\wP_{n,k} \\
            & = u_{n,0}(f)\Big(P_{n,0} - \frac{1}{n}\,\wD P_{n,0}\Big)
                + \sum_{k=1}^{n-1} u_{n,k}(f)\Big(P_{n,k} - \frac{1}{n}\,\wD P_{n,k}\Big) \\
            & \quad + u_{n,n}(f)\Big(P_{n,n} - \frac{1}{n}\,\wD P_{n,n}\Big) \\
            & = u_{n,0}(f)P_{n,0} + \sum_{k=1}^{n-1} u_{n,k}(f) P_{n,k} + u_{n,n}(f)P_{n,n} \\
            & \quad -\frac{\varphi}{n}\bigg(u_{n,0}(f)P_{n,0}'' + \sum_{k=1}^{n-1} u_{n,k}(f)P_{n,k}'' + u_{n,n}(f)P_{n,n}''\bigg) \\
            & = U_n f - \frac{1}{n}\,\varphi U_n'' f.
\end{align*}
Then from (a) we obtain
$$
  \wU_n f = U_n f - \frac{1}{n}\,\wD U_n f = U_n f - \frac{1}{n}\,U_n \wD f
          = U_n\Big(f-\frac{1}{n}\,\wD f\Big)
$$
which proves (b).

Now, commutative properties (c) and (d) follow from (b) and (a):
$$
  \wD \wU_n f = \wD U_n\Big(f-\frac{1}{n}\,\wD f\Big) = U_n\Big(\wD f - \frac{1}{n}\,\wD \wD f\Big)
               = \wU_n(\wD f),
$$
and
\begin{align*}
  U_n \wU_n f & = U_n U_n\Big(f-\frac{1}{n}\,\wD f\Big) = U_n U_n f - \frac{1}{n}\,U_n U_n \wD f \\
                & = U_n U_n f - \frac{1}{n}\,U_n \wD U_n f = \wU_n U_n f.
\end{align*}

The operators $\wU_n$ commute in the sense of (e), since
\begin{align*}
  \wU_m \wU_n f & = \wU_mU_n\big(f-\frac{1}{n}\,\wD f\big)\\
                & = U_m U_n f - \frac{1}{n}\,U_m U_n \wD f -\frac{1}{m}\,\wD U_m U_n f + \frac{1}{mn}\,U_m \wD^2 U_n f\\
                & = U_m U_n\Big(f-\frac{m+n}{mn}\,\wD f + \frac{1}{mn}\,\wD^2 f\Big).
\end{align*}
The same expression on the right-hand side we obtain for $\wU_n \wU_m f$ because of properties
(a), (b) and $U_m U_n f = U_n U_m f$.

We recall two more properties of the operator $U_n$ and function  $f\in W^2(\varphi)[0,1]$ (see \cite[eqs.~(4.8), (2.4)]{PaPo1994}):
\begin{align} \label{eq:2.6}
  \|U_n f  - f\| & \le \frac{1}{n}\,\big\|\wD f\big\|, \\
  \big\|U_n \wD f\big\| & \le \big\|\wD f\big\|. \notag
\end{align}
Therefore
$$
  \|\wU_n f - f\| = \Big\|U_n f  -\frac{1}{n}\,U_n \wD f - f\Big\|
                   \le \|U_n f - f\| + \frac{1}{n}\,\big\|U_n \wD f\big\|
                   \le \frac{2}{n}\,\|\wD f\|,
$$
hence $\lim\limits_{n\to \infty} \|\wU_n f - f\|=0$, i.e. the limit (f) holds true.

From the proof of Lemma 4.2 in \cite {PaPo1994} for every $g\in W^2(\varphi)[0,1]$ we have
\begin{equation*}
  \wD U_ng(x) = \sum_{k=1}^{n-1} P_{n,k}(x)\int_0^1 (n-1) P_{n-2,k-1}(t)\wD g(t)\,dt,
\end{equation*}
From the above representation we obtain
$$
  |\wD U_ng(x)|
  \le \big\| \wD g \big\|\,\sum _{k=1}^{n-1}P_{n,k}(x)\int _0^1 (n-1) P_{n-2,k-1}(t)\,dt
  \le \big\| \wD g \big\|,
$$
which proves (g).
\end{proof}

\smallskip
We now introduce a function that will prove useful in our investigations:
\begin{align} \label{eq:2.7}
  T_{n,k}(x) & := k(k-1)\frac{1-x}{x} - 2k(n-k) + (n-k)(n-k-1)\frac{x}{1-x} \\
             & = n\bigg[-1-\frac{1-2x}{\varphi(x)}\Big(\frac{k}{n}-x\Big) + \frac{n}{\varphi(x)}\Big(\frac{k}{n}-x\Big)^2\bigg].
               \notag
\end{align}

Observe that
\begin{align}
  T_{n,k}'(x)  & = - \frac{k(k-1)}{x^2} + \frac{(n-k)(n-k-1)}{(1-x)^2}, \label{eq:2.8} \\
  T_{n,k}''(x) & = \frac{2k(k-1)}{x^3} + \frac{2(n-k)(n-k-1)}{(1-x)^3} > 0, \qquad x\in (0,1). \label{eq:2.9}
\end{align}

\begin{proposition} \label{pr:2.3}
(a) The following relation concerning $P_{n,k}$, $T_{n,k}$ and differential operator $\wD$ holds:
\begin{equation} \label{eq:2.10}
  \wD P_{n,k}(x) = T_{n,k}(x) P_{n,k}(x).
\end{equation}

(b) If $\,\alpha$ is an arbitrary real number, then
$$
  \Phi(\alpha) := \sum_{k=0}^n \Big(\alpha - \frac{1}{n}\,T_{n,k}(x)\Big)^2 P_{n,k}(x) = \alpha^2 + 2 - \frac{2}{n}.
$$
\end{proposition}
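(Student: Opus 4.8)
For part (a) the plan is to compute $P_{n,k}''$ directly from the product form $P_{n,k}(x)=\binom{n}{k}x^k(1-x)^{n-k}$. Differentiating once gives the logarithmic derivative $P_{n,k}'(x)/P_{n,k}(x)=k/x-(n-k)/(1-x)$, and differentiating a second time yields
$$
  \frac{P_{n,k}''(x)}{P_{n,k}(x)}=\Big(\frac{k}{x}-\frac{n-k}{1-x}\Big)^{2}-\frac{k}{x^{2}}-\frac{n-k}{(1-x)^{2}}
  =\frac{k(k-1)}{x^{2}}-\frac{2k(n-k)}{x(1-x)}+\frac{(n-k)(n-k-1)}{(1-x)^{2}}.
$$
Multiplying through by $\varphi(x)=x(1-x)$ turns the right-hand side into precisely the first expression for $T_{n,k}(x)$ in \eqref{eq:2.7}, which is \eqref{eq:2.10}. (Alternatively one could start from \eqref{eq:2.5} and use the elementary ratios $P_{n-2,k-2}/P_{n,k}=k(k-1)/\big(n(n-1)x^{2}\big)$, $P_{n-2,k-1}/P_{n,k}=k(n-k)/\big(n(n-1)x(1-x)\big)$, $P_{n-2,k}/P_{n,k}=(n-k)(n-k-1)/\big(n(n-1)(1-x)^{2}\big)$.)

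For part (b) I would exploit the second representation of $T_{n,k}$ in \eqref{eq:2.7}. Setting $\xi_k:=k/n-x$ and $a:=(1-2x)/\varphi(x)$, $b:=n/\varphi(x)$, one has
$$
  \alpha-\frac{1}{n}\,T_{n,k}(x)=(\alpha+1)+a\,\xi_k-b\,\xi_k^{2},
$$
so squaring and expanding expresses $\big(\alpha-\tfrac1n T_{n,k}\big)^{2}$ as a linear combination of $1,\xi_k,\xi_k^{2},\xi_k^{3},\xi_k^{4}$ with coefficients depending only on $x$ and $\alpha$. Summing against $P_{n,k}(x)$ and using $\sum_k \xi_k^{i}P_{n,k}(x)=\mu_{n,i}(x)$, the moment values of Proposition~\ref{pr:2.1}(b) give $\mu_{n,0}=1$, $\mu_{n,1}=0$, $\mu_{n,2}=\varphi/n$, $\mu_{n,3}=(1-2x)\varphi/n^{2}$, $\mu_{n,4}=3(n-2)\varphi^{2}/n^{3}+\varphi/n^{3}$. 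The vanishing of $\mu_{n,1}$ removes the cross term with $\xi_k$, and the potentially $\varphi$-singular contributions $a^{2}\mu_{n,2}$, $b^{2}\mu_{n,4}$ and $-2ab\,\mu_{n,3}$ collapse to $\big(1-(1-2x)^{2}\big)/(n\varphi(x))=4/n$ since $1-(1-2x)^{2}=4\varphi(x)$; the rest reduces to $(\alpha+1)^{2}-2(\alpha+1)+3(n-2)/n$, and combining everything gives $(\alpha+1)^{2}-2(\alpha+1)+3-2/n=\alpha^{2}+2-2/n$.

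The one place that needs care is the bookkeeping in part (b): tracking all six monomials produced by the square, substituting the four moment formulas correctly, and verifying that every $x$-dependent term cancels. Since the computation is carried out for $x\in(0,1)$ where all the divisions by $\varphi(x)$ are legitimate, and $\Phi(\alpha)$ is a polynomial in $x$, the resulting identity extends to the whole of $[0,1]$ by continuity.
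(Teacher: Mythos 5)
Your proposal is correct. Part (b) is essentially identical to the paper's proof: both square the representation $\alpha-\tfrac1n T_{n,k}(x)=(\alpha+1)+\tfrac{1-2x}{\varphi(x)}\big(\tfrac kn-x\big)-\tfrac{n}{\varphi(x)}\big(\tfrac kn-x\big)^2$ and evaluate the resulting sum with the central moments $\mu_{n,0},\dots,\mu_{n,4}$; your bookkeeping of the six terms and the cancellation $1-(1-2x)^2=4\varphi(x)$ reproduces the paper's computation, and the final value $\alpha^2+2-\tfrac2n$ checks out. Part (a) is where you deviate slightly: the paper goes through \eqref{eq:2.5} together with the degree-raising identity $\varphi(x)P_{n,k}(x)=\tfrac{(k+1)(n-k+1)}{(n+1)(n+2)}P_{n+2,k+1}(x)$, obtaining $\varphi P_{n,k}''$ as a combination of $P_{n,k-1},P_{n,k},P_{n,k+1}$ before converting to $T_{n,k}P_{n,k}$, whereas you compute $P_{n,k}''/P_{n,k}$ directly from the logarithmic derivative. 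Your route is a little shorter and avoids the auxiliary identity; your parenthetical alternative via the ratios $P_{n-2,j}/P_{n,k}$ is essentially the paper's argument in disguise. Both versions are valid on $(0,1)$ and, as you note, extend to $[0,1]$ since both sides of \eqref{eq:2.10} (and $\Phi(\alpha)$) are polynomials in $x$.
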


\begin{proof}
(a) \ From \eqref{eq:2.4}, \eqref{eq:2.5} and
$$
  \varphi(x)P_{n,k}(x) = \frac{(k+1)(n-k+1)}{(n+1)(n+2)}\,P_{n+2,k+1}(x),
$$
it follows that
\begin{align*}
  \varphi(x)P_{n,k}''(x)
    & = n(n-1)\big[\varphi(x)P_{n-2,k-2}(x)-2\varphi(x)P_{n-2,k-1}(x)+\varphi(x)P_{n-2,k}(x)\big] \\
    & = n(n-1)\Big[\frac{(k-1)(n-k+1)}{n(n-1)}\,P_{n,k-1}(x)-2\,\frac{k(n-k)}{n(n-1)}\,P_{n,k}(x) \\
    & \hspace*{56mm} + \frac{(k+1)(n-k-1)}{n(n-1)}\,P_{n,k+1}(x)\Big] \\
    & = (k-1)(n-k+1)\,P_{n,k-1}(x) - 2k(n-k)\,P_{n,k}(x) \\
    & \quad + (k+1)(n-k-1)\,P_{n,k+1}(x) \\
    & = \Big[k(k-1)\frac{1-x}{x} - 2k(n-k) + (n-k)(n-k-1)\frac{x}{1-x}\Big]P_{n,k}(x) \\
    & = T_{n,k}(x)P_{n,k}(x),
\end{align*}
i.e. the identity \eqref{eq:2.10}.

(b) \ We apply the formulae for the Bernstein operator moments in Proposition~\ref{pr:2.1}\,(b):
\begin{align*}
  \Phi(\alpha)
    & = \sum_{k=0}^n \Big[\alpha + 1 + \frac{1-2x}{\varphi(x)} \Big(\frac{k}{n}-x\Big)
                          - \frac{n}{\varphi(x)}\Big(\frac{k}{n}-x\Big)^{\!2}\Big]^2 P_{n,k}(x) \\
    & = \sum_{k=0}^n \Big[(\alpha+1)^2 + \frac{(1-2x)^2}{\varphi^2(x)}\Big(\frac{k}{n}-x\Big)^2
                            + \frac{n^2}{\varphi^2(x)}\Big(\frac{k}{n}-x\Big)^4
                            + \frac{2(\alpha+1)(1-2x)}{\varphi(x)}\Big(\frac{k}{n}-x\Big) \\
    & \hspace*{16mm}        - \frac{2(\alpha+1)n}{\varphi(x)}\Big(\frac{k}{n}-x\Big)^2
                            - \frac{2n(1-2x)}{\varphi^2(x)}\Big(\frac{k}{n}-x\Big)^3\Big] P_{n,k}(x) \\
    & = (\alpha+1)^2\mu_{n,0}(x) + \frac{(1-2x)^2}{\varphi^2(x)}\,\mu_{n,2}(x) + \frac{n^2}{\varphi^2(x)}\,\mu_{n,4}(x)
                      + \frac{2(\alpha+1)(1-2x)}{\varphi(x)}\,\mu_{n,1}(x) \\
    & \hspace*{20mm}  - \frac{2(\alpha+1)n}{\varphi(x)}\,\mu_{n,2}(x) - \frac{2n(1-2x)}{\varphi^2(x)}\,\mu_{n,3}(x) \\
    & = (\alpha+1)^2\cdot 1 + \frac{(1-2x)^2}{\varphi^2(x)}\,\frac{\varphi(x)}{n}
                            + \frac{n^2}{\varphi^2(x)}\,\frac{(3n-6)\varphi^2(x)+\varphi(x)}{n^3} \\
    & \hspace*{20mm}        + \frac{2(\alpha+1)(1-2x)}{\varphi(x)}\cdot 0 - \frac{2(\alpha+1)n}{\varphi(x)}\,\frac{\varphi(x)}{n}
                            - \frac{2n(1-2x)}{\varphi^2(x)}\,\frac{(1-2x)\varphi(x)}{n^2} \\
    & = (\alpha+1)^2 + \frac{1-4\varphi(x)}{n\varphi(x)} + \frac{(3n-6)\varphi(x)+1}{n\varphi(x)} - 2(\alpha+1)
                     - \frac{2(1-4\varphi(x))}{n\varphi(x)} \\
    & = \alpha^2 + 2\alpha + 1 + \frac{1}{n\varphi(x)} - \frac{4}{n} + 3 - \frac{6}{n} + \frac{1}{n\varphi(x)}
                               -2\alpha - 2 - \frac{2}{n\varphi(x)} + \frac{8}{n} \\
   & = \alpha^2 + 2 - \frac{2}{n}.
\end{align*}
\end{proof}

Auxiliary technical results will be useful for further estimations.

\begin{proposition} \label{pr:2.4}
  If $n\in \mathbb{N}$, $n\ge 2$, and
  $$
    \lambda(n) := \sum_{k=n}^{\infty} \frac{1}{k^2(k+1)}, \qquad
    \theta(n) := \sum_{k=n}^{\infty} \frac{1}{k^2(k+1)^2},
  $$
  then
  \begin{gather} \label{eq:2.11}
    \frac{1}{2n^2} \le \lambda(n) \le \frac{1}{n^2}, \\
    \theta(n) \le \frac{4}{9n^3}. \label{eq:2.12}
  \end{gather}
\end{proposition}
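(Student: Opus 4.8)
The plan is to estimate the two tail sums by comparison with telescoping series. For $\lambda(n)$, note the partial-fraction decomposition
$$
  \frac{1}{k^2(k+1)} = \frac{1}{k^2} - \frac{1}{k} + \frac{1}{k+1} = \frac{1}{k^2} - \frac{1}{k(k+1)},
$$
so that $\sum_{k=n}^{\infty}\frac{1}{k^2(k+1)} = \sum_{k=n}^{\infty}\frac{1}{k^2} - \frac{1}{n}$, using that $\sum_{k=n}^{\infty}\frac{1}{k(k+1)} = \frac{1}{n}$ telescopes. Then the classical bounds $\frac{1}{n} \le \sum_{k=n}^{\infty}\frac{1}{k^2} \le \frac{1}{n} + \frac{1}{2n^2}$ (the lower one from $\frac{1}{k^2} \ge \frac{1}{k(k+1)}$, the upper one from the integral test together with $\frac{1}{k^2} \le \frac{1}{k-1} - \frac{1}{k}$ handled carefully, or simply from $\sum_{k=n}^\infty \frac1{k^2} \le \frac1{n^2} + \int_n^\infty \frac{dt}{t^2} = \frac1{n^2}+\frac1n$) would immediately give $0 \le \lambda(n) \le \frac{1}{2n^2}$ on one side; to land the stated $\frac{1}{2n^2} \le \lambda(n) \le \frac{1}{n^2}$ I would instead sandwich directly: for the upper bound use $\frac{1}{k^2(k+1)} \le \frac{1}{(k-1)k(k+1)} = \frac12\big(\frac{1}{(k-1)k} - \frac{1}{k(k+1)}\big)$ for $k \ge n \ge 2$, which telescopes to $\frac{1}{2(n-1)n} \le \frac{1}{n^2}$ — wait, this needs $\frac{1}{2(n-1)n} \le \frac{1}{n^2}$, i.e. $n \le 2(n-1)$, i.e. $n \ge 2$, which holds; for the lower bound use $\frac{1}{k^2(k+1)} \ge \frac{1}{k(k+1)^2} = \frac12\big(\frac{1}{k(k+1)} - \frac{1}{(k+1)(k+2)}\big) \cdot$(adjust), or more simply $\frac{1}{k^2(k+1)} \ge \frac{1}{k(k+1)(k+2)} = \frac12\big(\frac1{k(k+1)} - \frac1{(k+1)(k+2)}\big)$, telescoping to $\frac{1}{2n(n+1)} \ge \frac{1}{2n^2}\cdot\frac{n}{n+1}$, which is not quite $\frac{1}{2n^2}$; so for the lower bound I would rather keep the exact identity $\lambda(n) = \sum_{k\ge n} k^{-2} - \frac1n$ and invoke $\sum_{k\ge n}k^{-2} \ge \frac1n + \frac{1}{2n^2}$, which follows from $k^{-2} \ge \int_k^{k+1}t^{-2}\,dt + \tfrac12\big(\int_{k}^{k+1} - \int_{k-1}^{k}\big)$-type midpoint bounds, or most cleanly from $\sum_{k\ge n}k^{-2} \ge \int_{n-1/2}^{\infty} t^{-2}\,dt = \frac{1}{n-1/2} = \frac{2}{2n-1} \ge \frac1n + \frac{1}{2n^2}$ (the last inequality reduces to $4n^2 \ge (2n+1)(2n-1) = 4n^2-1$, true).

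For $\theta(n)$, I would use the crude but sufficient bound $\frac{1}{k^2(k+1)^2} \le \frac{1}{k^3(k+1)}$ and then a telescoping estimate: $\frac{1}{k^3(k+1)} \le \frac{1}{(k-1)k^2(k+1)}$? This gets messy; cleaner is to write $\frac{1}{k^2(k+1)^2} = \big(\frac1k - \frac1{k+1}\big)^2$ and bound $\sum_{k\ge n}\big(\frac1k - \frac1{k+1}\big)^2 \le \big(\frac1n - \frac1{n+1}\big)\sum_{k\ge n}\big(\frac1k - \frac1{k+1}\big) = \frac{1}{n^2(n+1)}\cdot\frac1n = \frac{1}{n^3(n+1)}$ — but $\frac{1}{n^3(n+1)} \le \frac{1}{2n^4}$, which is even stronger than $\frac{4}{9n^3}$, so that works for all $n \ge 1$ and in particular beats $\frac{4}{9n^3}$. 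Actually the factoring-out step uses that $\frac1k - \frac1{k+1}$ is decreasing in $k$, so each factor $\le \frac1n - \frac1{n+1}$; this is the key monotonicity observation.

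The \textbf{main obstacle} is purely bookkeeping: getting the telescoping comparisons to land on exactly the constants $\frac{1}{2n^2}$, $\frac{1}{n^2}$, $\frac{4}{9n^3}$ rather than something slightly weaker, and in particular making sure the chosen comparison inequalities ($\frac{1}{k^2(k+1)} \lessgtr \frac{1}{(k\mp1)k(k+1)}$ etc.) are valid for all $k \ge n$ with $n \ge 2$ (the restriction $n \ge 2$ is exactly what is needed so that $k - 1 \ge 1$ in the relevant denominators). I expect the cleanest writeup to separate: (i) the exact identity $\lambda(n) = \sum_{k \ge n}k^{-2} - 1/n$ via telescoping of $\sum k^{-1}(k+1)^{-1}$; (ii) two-sided bounds on $\sum_{k\ge n}k^{-2}$ by midpoint/integral comparison; (iii) the square-of-telescoping trick for $\theta(n)$ with the monotonicity of the increments. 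None of the three steps is deep; the whole proposition is a warm-up lemma supplying the numerical constants that feed into the Bernstein- and Voronovskaya-type estimates later.

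\begin{proof}
Observe first the partial fraction identity
$$
  \frac{1}{k^2(k+1)} = \frac{1}{k^2} - \frac{1}{k(k+1)}, \qquad k\ge 1.
$$
Since $\displaystyle\sum_{k=n}^{\infty}\frac{1}{k(k+1)} = \sum_{k=n}^{\infty}\Big(\frac1k - \frac1{k+1}\Big) = \frac1n$, we obtain the exact formula
\begin{equation} \label{eq:2.13}
  \lambda(n) = \sum_{k=n}^{\infty}\frac{1}{k^2} - \frac1n .
\end{equation}
To bound the series on the right we compare with integrals. On one hand, $\dfrac{1}{k^2} \le \displaystyle\int_{k-1}^{k}\frac{dt}{t^2}$ for $k\ge 2$, hence for $n\ge 2$
$$
  \sum_{k=n}^{\infty}\frac{1}{k^2} \le \int_{n-1}^{\infty}\frac{dt}{t^2} = \frac{1}{n-1}.
$$
On the other hand, $\dfrac{1}{k^2} \ge \displaystyle\int_{k}^{k+1}\frac{dt}{t^2}$, and more precisely, since $t\mapsto t^{-2}$ is convex, $\dfrac{1}{k^2} \ge \displaystyle\int_{k-1/2}^{k+1/2}\frac{dt}{t^2}$; summing for $k\ge n$ gives
$$
  \sum_{k=n}^{\infty}\frac{1}{k^2} \ge \int_{n-1/2}^{\infty}\frac{dt}{t^2} = \frac{1}{n-\tfrac12} = \frac{2}{2n-1}.
$$
Combining with \eqref{eq:2.13},
$$
  \lambda(n) \ge \frac{2}{2n-1} - \frac1n = \frac{1}{n(2n-1)} \ge \frac{1}{2n^2},
$$
where the last inequality is $2n^2 \ge n(2n-1)$, i.e. $n\ge 0$. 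For the upper bound,
$$
  \lambda(n) \le \frac{1}{n-1} - \frac1n = \frac{1}{n(n-1)} .
$$
For $n\ge 2$ we have $n(n-1) \ge \tfrac12 n^2$ (equivalently $n\ge 2$), so this only yields $\lambda(n)\le 2/n^2$, which is not sharp enough; we argue differently. From \eqref{eq:2.13} and the sharper estimate, valid for $k\ge 2$,
$$
  \frac{1}{k^2} \le \frac12\Big(\frac{1}{k-1}+\frac{1}{k}\Big) - \frac12\Big(\frac{1}{k}+\frac{1}{k+1}\Big)
               = \frac12\Big(\frac{1}{k-1} - \frac{1}{k+1}\Big)
$$
(which reduces to $\tfrac{1}{k^2}\le \tfrac{1}{(k-1)(k+1)}$, true for $k\ge 2$), we get by telescoping, for $n\ge 2$,
$$
  \sum_{k=n}^{\infty}\frac{1}{k^2} \le \frac12\Big(\frac{1}{n-1}+\frac1n\Big) = \frac{2n-1}{2n(n-1)},
$$
hence
$$
  \lambda(n) \le \frac{2n-1}{2n(n-1)} - \frac1n = \frac{1}{2n(n-1)} \le \frac{1}{n^2}
$$
for $n\ge 2$, since $2n(n-1) \ge n^2 \iff n\ge 2$. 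This proves \eqref{eq:2.11}.

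For \eqref{eq:2.12}, write
$$
  \theta(n) = \sum_{k=n}^{\infty}\frac{1}{k^2(k+1)^2} = \sum_{k=n}^{\infty}\Big(\frac1k - \frac1{k+1}\Big)^2 .
$$
The sequence $a_k := \dfrac1k - \dfrac1{k+1} = \dfrac{1}{k(k+1)}$ is decreasing, so $a_k \le a_n$ for $k\ge n$, and therefore
$$
  \theta(n) = \sum_{k=n}^{\infty} a_k^2 \le a_n \sum_{k=n}^{\infty} a_k = \frac{1}{n(n+1)}\cdot\frac1n = \frac{1}{n^3(n+1)} .
$$
Finally $\dfrac{1}{n^3(n+1)} \le \dfrac{4}{9n^3}$, since $n+1 \ge \tfrac94$ for $n\ge 2$. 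This proves \eqref{eq:2.12}.
\end{proof}
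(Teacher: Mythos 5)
Your upper bound for $\lambda(n)$ is correct and lands on the same quantity $\tfrac{1}{2n(n-1)}$ as the paper (which instead compares $\tfrac{1}{k^2(k+1)}$ with $\tfrac{1}{(k-1)k(k+1)}$ and telescopes). The other two parts, however, contain genuine errors.

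\emph{Lower bound for $\lambda(n)$.} Your key step is $\tfrac{1}{k^2}\ge\int_{k-1/2}^{k+1/2}t^{-2}\,dt$, but this inequality is false: a direct computation gives $\int_{k-1/2}^{k+1/2}t^{-2}\,dt=\tfrac{1}{k^2-1/4}>\tfrac{1}{k^2}$, and indeed convexity of $t\mapsto t^{-2}$ yields exactly the reverse of what you wrote (the midpoint rule \emph{under}estimates the integral of a convex function). Consequently the claimed bound $\sum_{k\ge n}k^{-2}\ge\tfrac{1}{n-1/2}$ fails; at $n=2$ the left side is $\pi^2/6-1\approx0.645$ while the right side is $2/3$. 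The conclusion $\lambda(n)\ge\tfrac{1}{2n^2}$ is true, but your argument does not establish it. A correct route in your own framework is the trapezoid comparison $\int_k^{k+1}t^{-2}\,dt\le\tfrac12\big(k^{-2}+(k+1)^{-2}\big)$, which after summation gives $\sum_{k\ge n}k^{-2}\ge\tfrac1n+\tfrac{1}{2n^2}$ and hence $\lambda(n)\ge\tfrac{1}{2n^2}$; the paper instead multiplies each term by $\tfrac{k}{k-1}\cdot\tfrac{n-1}{n}\le1$ to reduce to the telescoping sum $\sum_{k\ge n}\tfrac{1}{(k-1)k(k+1)}=\tfrac{1}{2(n-1)n}$.

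\emph{Bound for $\theta(n)$.} With $a_k=\tfrac{1}{k(k+1)}$ you have $a_n\sum_{k\ge n}a_k=\tfrac{1}{n(n+1)}\cdot\tfrac1n=\tfrac{1}{n^2(n+1)}$, not $\tfrac{1}{n^3(n+1)}$ as written. Once corrected, the inequality $\tfrac{1}{n^2(n+1)}\le\tfrac{4}{9n^3}$ is equivalent to $5n\le4$ and is false for every $n\ge2$, so the ``factor out the largest term'' device is too lossy here (it loses roughly a factor of $3$, since it freezes $a_k\approx k^{-2}$ at its value for $k=n$ instead of letting it decay; note $\theta(n)\sim\tfrac{1}{3n^3}$, so there is little room to spare against $\tfrac{4}{9n^3}$). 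A comparison that does work is $\tfrac{1}{k^2(k+1)^2}\le\tfrac{1}{(k-1)k(k+1)(k+2)}$, which telescopes to $\tfrac{1}{3(n-1)n(n+1)}=\tfrac{1}{3n(n^2-1)}\le\tfrac{4}{9n^3}$ for $n\ge2$, with equality at $n=2$; this is the paper's argument.
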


\begin{proof}
Since $\frac{k}{k-1}\frac{n-1}n\le 1$ for $k\ge n$, we have for
the lower estimate of $\lambda(n)$
$$
   \lambda(n) \ge \sum_{k=n}^{\infty} \frac{1}{k^2(k+1)}.\frac{k}{k-1}.\frac{n-1}n =\frac{n-1}n \sum_{k=n}^{\infty} \frac{1}{(k-1)k(k+1)}=\frac{n-1}n.\frac1{2(n-1)n}=  \frac{1}{2n^2}.
$$
For the upper estimates of $\lambda(n)$ and $\theta(n)$,  we obtain
\begin{gather*}
  \lambda(n)<\sum_{k=n}^{\infty} \frac{1}{(k-1)k(k+1)} = \frac{1}{2n(n-1)} < \frac{1}{n^2}, \\
  \theta(n)<\sum_{k=n}^{\infty} \frac{1}{(k-1)k(k+1)(k+2)} = \frac{1}{3n(n^2-1)} < \frac{4}{9n^3}.
\end{gather*}
\end{proof}


\smallskip
\section{A Direct Theorem}

We will first prove the next upper estimate for the norm of the operator $\wU_n$ defined in \eqref{eq:1.4}.

\begin{lemma} \label{le:3.1}
  If $\,n\in \mathbb{N}$ and $f\in C[0,1]$, then
  \begin{equation} \label{eq:3.1}
    \big\|\wU_n f\big\| \le \sqrt{3}\,\|f\|, \quad\mbox{i.e.} \quad  \|\wU_n\| \le \sqrt{3}.
  \end{equation}
\end{lemma}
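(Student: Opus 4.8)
The plan is to reduce the estimate to a pointwise bound on the sum $\sum_{k=0}^{n}\big|\wP_{n,k}(x)\big|$ and then to recognize Proposition~\ref{pr:2.3}\,(b) with $\alpha=1$ as exactly the tool that controls this sum through Cauchy--Schwarz.

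First I would record that each coefficient functional satisfies $|u_{n,k}(f)|\le\|f\|$. For $k=0$ and $k=n$ this is immediate, and for $1\le k\le n-1$ it follows from positivity of the integral together with the normalization $(n-1)\int_0^1 P_{n-2,k-1}(t)\,dt=1$ (a Beta-integral identity). Consequently, for every $x\in[0,1]$,
$$
  \big|\wU_n(f,x)\big| \le \|f\|\sum_{k=0}^{n}\big|\wP_{n,k}(x)\big|.
$$
Next I would use Proposition~\ref{pr:2.3}\,(a), which gives $\wD P_{n,k}=T_{n,k}P_{n,k}$, so that by \eqref{eq:1.5}
$$
  \wP_{n,k}(x) = \Big(1-\frac{1}{n}\,T_{n,k}(x)\Big)P_{n,k}(x),
$$
and therefore $\sum_{k=0}^{n}\big|\wP_{n,k}(x)\big| = \sum_{k=0}^{n}\big|1-\tfrac{1}{n}T_{n,k}(x)\big|\,P_{n,k}(x)$.

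Now I would apply the Cauchy--Schwarz inequality with weights $P_{n,k}(x)$:
$$
  \sum_{k=0}^{n}\Big|1-\frac{1}{n}\,T_{n,k}(x)\Big|P_{n,k}(x)
  \le \Big(\sum_{k=0}^{n}\Big(1-\frac{1}{n}\,T_{n,k}(x)\Big)^2 P_{n,k}(x)\Big)^{1/2}
      \Big(\sum_{k=0}^{n} P_{n,k}(x)\Big)^{1/2}.
$$
The second factor equals $1$, and by Proposition~\ref{pr:2.3}\,(b) with $\alpha=1$ the first factor equals $\big(\Phi(1)\big)^{1/2}=\big(3-\tfrac{2}{n}\big)^{1/2}\le\sqrt{3}$. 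Combining the displays yields $\|\wU_n f\|\le\sqrt{3}\,\|f\|$, which is \eqref{eq:3.1}.

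There is no serious obstacle here; the whole argument is short once one sees the right moves. The only point that requires a little care is the uniform bound $|u_{n,k}(f)|\le\|f\|$ in the middle range of $k$, and the genuinely clever step — already prepared in Section~2 — is that the quadratic functional $\Phi(\alpha)$ of Proposition~\ref{pr:2.3}\,(b) is precisely what the Cauchy--Schwarz estimate of $\sum_k\big|\wP_{n,k}\big|$ calls for, with the specific value $\alpha=1$ matching the coefficient $1$ in \eqref{eq:1.5}.
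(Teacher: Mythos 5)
Your proof is correct and follows essentially the same route as the paper: bound $|u_{n,k}(f)|\le\|f\|$, write $\wP_{n,k}=(1-\tfrac1n T_{n,k})P_{n,k}$ via Proposition~\ref{pr:2.3}\,(a), apply Cauchy--Schwarz with weights $P_{n,k}$, and invoke $\Phi(1)=3-\tfrac2n$ from Proposition~\ref{pr:2.3}\,(b). The only difference is that you spell out the normalization $(n-1)\int_0^1 P_{n-2,k-1}(t)\,dt=1$ justifying $|u_{n,k}(f)|\le\|f\|$, which the paper leaves implicit.
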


\begin{proof}
We have
$$
  \wP_{n,k}(x) = P_{n,k}(x) - \frac{1}{n}\,\wD P_{n,k}(x) = \Big(1-\frac{1}{n}\,T_{n,k}(x)\Big)P_{n,k}(x).
$$
Then for $x\in[0,1]$,
\begin{align*}
  \big|\wU_n(f,x)\big|
  & = \left| \sum_{k=0}^n u_{n,k}(f)\wP_{n,k}(x) \right|
    \le \sum_{k=0}^n |u_{n,k}(f)|\,\big|\wP_{n,k}(x)\big| \\
  & \le \|f\| \sum_{k=0}^n \big|\wP_{n,k}(x)\big|
    = \|f\| \sum_{k=0}^n \Big|1-\frac{1}{n}\,T_{n,k}(x)\Big| P_{n,k}(x).
\end{align*}

Applying Cauchy inequality we obtain
$$
  \big|\wU_n(f,x)\big| \le
  \|f\| \sqrt{\sum_{k=0}^n \Big(1-\frac{1}{n}\,T_{n,k}(x)\Big)^2 P_{n,k}(x)}\,\sqrt{\sum_{k=0}^n P_{n,k}(x)}.
$$
Since $\sum_{k=0}^n P_{n,k}(x)=1$ identically, by Proposition~\ref{pr:2.3}\,(b) with $\alpha=1$ we find
$$
  \big|\wU_n(f,x)\big| \le \sqrt{3-\frac{2}{n}}\,\|f\| < \sqrt{3}\,\|f\|, \qquad x\in [0,1].
$$
Hence, inequality \eqref{eq:3.1} follows.
\end{proof}

\smallskip
In order to prove a direct theorem for the approximation rate for functions $f$ by the operator
$\wU_n f$ we need a Jackson type inequality.

\begin{lemma} \label{le:3.2}
  If $\,n\in \mathbb{N}$, $f\in W^2_0(\varphi)[0,1]$ and $\wD f\in W^2(\varphi)[0,1]$, then
  \begin{equation} \label{eq:3.2}
    \big\|\wU_n f - f\big\| \le \frac{1}{n^2}\|\wD^2 f\|.
  \end{equation}
\end{lemma}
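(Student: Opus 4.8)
The plan is to exploit the key identity from Proposition~\ref{pr:2.2}(b), namely $\wU_n f = U_n\big(f - \tfrac{1}{n}\wD f\big)$, together with the known Jackson-type estimate \eqref{eq:2.6} for the genuine Bernstein-Durrmeyer operator $U_n$. First I would write $g := f - \tfrac{1}{n}\wD f$ so that $\wU_n f - f = U_n g - f$. The difficulty is that \eqref{eq:2.6} compares $U_n g$ with $g$, not with $f$, so I need to transfer the estimate. The natural route is to iterate: since $\wU_n f = U_n g$, I would further apply Proposition~\ref{pr:2.2}(a) (the commutation $\wD U_n = U_n \wD$ on $W^2_0(\varphi)$) to peel off the structure, aiming to reach an expression of the form $U_n U_n\big(f - \tfrac{2}{n}\wD f + \tfrac{1}{n^2}\wD^2 f\big)$ or similar, from which a telescoping argument yields a bound purely in terms of $\|\wD^2 f\|$.

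Concretely, the cleanest approach is the following. Using (b), $\wU_n f - f = U_n f - f - \tfrac{1}{n}U_n\wD f$. Now apply \eqref{eq:2.6} to the function $U_n f - f$? That will not directly work since $U_n f - f$ need not lie in the right space with a controllable $\wD$. Instead I would use the exact expansion already appearing in the proof of Proposition~\ref{pr:2.2}(e): for $g \in W^2_0(\varphi)[0,1]$ with $\wD g \in W^2(\varphi)[0,1]$ one has, by applying (a) twice,
\[
  \wU_n f - f = U_n\!\Big(f - \tfrac{1}{n}\wD f\Big) - f
             = \big(U_n f - f\big) - \tfrac{1}{n}\big(U_n \wD f - \wD f\big) - \tfrac{1}{n}\wD f.
\]
Then I would estimate $\|U_n f - f - \tfrac{1}{n}\wD f\|$ as a single quantity. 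The right tool is a sharper, second-order version of \eqref{eq:2.6}: from $U_n h - h = \tfrac{1}{n}\,(\text{something})$ applied to $h = f$, and the representation of $U_n h - h$ via $\wD$, one gets that $U_n f - f - \tfrac{1}{n}\wD U_n f$ vanishes — wait, that is exactly $\wU_n f - f$ up to the argument. The honest bound: iterate \eqref{eq:2.6} with the commutation to get
\[
  \|\wU_n f - f\| \;=\; \Big\|U_n f - f - \tfrac1n U_n \wD f\Big\|
  \;\le\; \Big\|U_n\big(f - \tfrac1n\wD f\big) - \big(f - \tfrac1n\wD f\big)\Big\| + \tfrac1n\|U_n\wD f - \wD f\|
\]
and each of the two terms on the right is at most $\tfrac1n\|\wD(f-\tfrac1n\wD f)\| \le \tfrac1n\big(\|\wD f\| + \tfrac1n\|\wD^2 f\|\big)$ and $\tfrac1n\cdot\tfrac1n\|\wD\wD f\| = \tfrac1{n^2}\|\wD^2 f\|$ respectively — this is still not tight enough and carries a spurious $\|\wD f\|$ term.

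The resolution of this obstacle — which I expect to be the main technical point — is that the $\|\wD f\|$ contributions must cancel. The correct bookkeeping is to keep everything inside a single application of $U_n$: writing $h = f - \tfrac1n\wD f = g$, one has $\wD g = \wD f - \tfrac1n \wD^2 f$, and then $\wU_n f - f = U_n g - g + \tfrac1n\wD f = (U_n g - g) + \tfrac1n\wD f$; applying \eqref{eq:2.6} to $g$ gives $\|U_n g - g\|\le \tfrac1n\|\wD g\| = \tfrac1n\|\wD f - \tfrac1n\wD^2 f\|$, which still leaves a $\|\wD f\|$ term. So a pure norm-triangle-inequality argument is insufficient; instead I would use the explicit integral representation of $U_n g - g$ in terms of $\wD g$ (as in \cite{PaPo1994}, analogous to the formula for $\wD U_n g$ displayed in the proof of Proposition~\ref{pr:2.2}), observe that applying it a second time to $\wD f$ produces the cancellation, and thereby obtain $\wU_n f - f = -\tfrac{1}{n^2}\,R_n(\wD^2 f)$ for an operator $R_n$ with $\|R_n\|\le 1$ — likely $R_n$ is (a piece of) $U_n$ itself or a convex combination built from the Bernstein basis, so the bound \eqref{eq:3.2} with constant exactly $1$ follows. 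The hard part is identifying this remainder operator $R_n$ precisely and verifying $\|R_n\|\le 1$; everything else is the algebra of composing the representations for $U_n - I$ and $\wD$.
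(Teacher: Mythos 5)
Your plan correctly diagnoses why the naive triangle-inequality route fails (the stray $\tfrac1n\|\wD f\|$ term) and correctly conjectures the shape of the answer, namely $\wU_n f - f = -\tfrac{1}{n^2}R_n$ applied to $\wD^2 f$ with $\|R_n\|\le 1$. But you stop exactly where the proof actually lives: you never identify $R_n$ or verify $\|R_n\|\le 1$, and you explicitly defer this as ``the hard part.'' That is a genuine gap, because the missing ingredient is the central idea. The paper obtains the remainder by telescoping over the \emph{degree}, not by composing representations at fixed $n$: from $U_k f - U_{k+1}f = \tfrac{1}{k(k+1)}\,\wD U_{k+1}f$ (\cite{PaPo1994}, Lemma~4.1) and the commutation $\wD U_k = U_k\wD$ on $W^2_0(\varphi)$, one computes $\wU_k f - \wU_{k+1}f = -\tfrac{1}{k^2(k+1)}\,\wD U_{k+1}\wD f$, and summing over $k\ge n$ (using $\wU_s f\to f$) gives
\begin{equation*}
  \wU_n f - f = -\sum_{k=n}^{\infty}\frac{1}{k^2(k+1)}\,\wD U_{k+1}\wD f .
\end{equation*}
The bound then follows from the contraction property $\|\wD U_{k+1}g\|\le\|\wD g\|$ (Proposition~\ref{pr:2.2}\,(g)) applied to $g=\wD f$, together with $\sum_{k\ge n}\tfrac{1}{k^2(k+1)}\le\tfrac{1}{n^2}$ (Proposition~\ref{pr:2.4}). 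In particular your guess that $R_n$ is ``a piece of $U_n$ itself'' is off: the remainder is an infinite weighted average of the operators $\wD U_{k+1}$ over all degrees $k\ge n$, and no single application of the integral representation at level $n$ produces it.

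A secondary issue: both displayed decompositions in your attempt are algebraically wrong. With $g=f-\tfrac1n\wD f$ one has $U_nf-f-\tfrac1nU_n\wD f=(U_ng-g)-\tfrac1n\wD f$, whereas the two terms you write down sum to $U_nf-f-\tfrac2nU_n\wD f+\tfrac2n\wD f$. You sense correctly that the $\tfrac1n\wD f$ term must cancel; the mechanism is that $\tfrac1n\wD f=\sum_{k\ge n}\tfrac{1}{k(k+1)}\wD f$ recombines with $U_nf-f=\sum_{k\ge n}\tfrac{1}{k(k+1)}U_{k+1}\wD f$ into differences $U_{k+1}\wD f-\wD f$, each of which is again $O(1/k)\|\wD^2f\|$. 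Carrying out this bookkeeping to get the sharp constant $1$ essentially forces you to rediscover the telescoping series above; without it the argument is incomplete.
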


\begin{proof}
Having in mind the relation
$$
  U_k f - U_{k+1} f = \frac{1}{k(k+1)}\,\wD U_{k+1} f,
$$
(see \cite[Lemma~4.1]{PaPo1994}) and Proposition~\ref{pr:2.1}\,(a) for $f\, \in W^2_0(\varphi)[0,1]$, we obtain
\begin{align*}
  \wU_k f - \wU_{k+1} f
    & = U_k f - \frac{1}{k}\,\wD U_k f - U_{k+1} f + \frac{1}{k+1}\,\wD U_{k+1} f \\
    & = U_k f - U_{k+1} f + \frac{1}{k+1}\,\wD U_{k+1} f - \frac{1}{k}\,\wD U_k f \\
    & = \Big(\frac{1}{k}-\frac{1}{k+1}\Big)\wD U_{k+1} f + \frac{1}{k+1}\,\wD U_{k+1} f - \frac{1}{k}\,\wD U_k f \\
    & = - \frac{1}{k}\big(\wD U_k f - \wD U_{k+1} f\big) \\
    & = - \frac{1}{k}\big(U_k \wD f - U_{k+1} \wD f\big) \\
    & = - \frac{1}{k}\cdot\frac{1}{k(k+1)}\, \wD U_{k+1} \wD f, \\
  \end{align*}
i.e.
\begin{equation} \label{eq:3.3}
  \wU_k f - \wU_{k+1} f = - \frac{1}{k^2(k+1)}\,\wD U_{k+1} \wD f.
\end{equation}

  Therefore for every $s>n$ we have
\begin{gather*}
 \wU_n f - \wU_s f=  \sum_{k=n}^{s-1} \big(\wU_k f - \wU_{k+1} f\big)
    = - \sum_{k=n}^{s-1} \frac{1}{k^2(k+1)}\,\wD U_{k+1} \wD f.
   \end{gather*}
   Letting $s\rightarrow\infty$ and by  Proposition~\ref{pr:2.2}\,(a) and (f) we obtain
\begin{equation}\label{eq1}
  \wU_n f - f = - \sum_{k=n}^{\infty} \frac{1}{k^2(k+1)}\,\wD U_{k+1} \wD f
\end{equation}

Then  from Proposition~\ref{pr:2.1}\,(g) for $ \wD f\in W^2(\varphi)[0,1]$
$$
  \|\wU_n f - f\| \le \sum_{k=n}^{\infty} \frac{1}{k^2(k+1)}\,\big\|\wD U_{k+1} \wD f\big\|
                  \le \sum_{k=n}^{\infty} \frac{1}{k^2(k+1)}\,\big\|\wD^2 f\big\|.
$$
Proposition~\ref{pr:2.4}, \eqref{eq:2.11}, yields
$$
  \sum_{k=n}^{\infty} \frac{1}{k^2(k+1)} \le \frac{1}{n^2}.
$$

Therefore
$$
  \big\|\wU_n f - f\big\| \le \frac{1}{n^2}\,\big\|\wD^2 f\big\|.
$$
\end{proof}

A direct result on the approximation rate of functions $f\in C[0,1]$ by the operators
\eqref{eq:1.4} in means of the K-functional \eqref{eq:1.6} follows immediately from both
lemmas above.

\smallskip
\begin{proof}[Proof of Theorem~\ref{th:1.1}]
Let $g$ be arbitrary function, such that $g\in W^2_0(\varphi)[0,1]$ and $\wD g\in W^2(\varphi)[0,1]$. Then by Lemma~\ref{le:3.1}
and Lemma~\ref{le:3.2} we obtain
\begin{align*}
  \big\|\wU_n f-f\big\|
    & \le \big\|\wU_n f-\wU_n g\big\| + \big\|\wU_n g-g\big\| + \|g-f\| \\
    & \le (1+\sqrt{3})\|f-g\| + \frac{1}{n^2}\,\big\|\wD^2 g\big\| \\
    & \le (1+\sqrt{3})\Big(\|f-g\|+\frac{1}{n^2}\,\big\|\wD^2 g\big\|\Big).
\end{align*}
Taking infimum over all functions $g$ with $g\in W^2_0(\varphi)[0,1]$ and $\wD g\in W^2(\varphi)[0,1]$ we obtain
$$
  \big\|\wU_n f-f\big\| \le (1+\sqrt{3})\,K\Big(f,\frac{1}{n^2}\Big).
$$
\end{proof}


\smallskip
\section{A Strong Converse Result}

First, we will prove a Voronovskaya type result for the operator $\wU_n$.

\begin{lemma} \label{le:4.1}
  If $\,\lambda(n) = \sum_{k=n}^{\infty} \frac{1}{k^2(k+1)}$,
  $\,\theta(n) = \sum_{k=n}^{\infty} \frac{1}{k^2(k+1)^2}$ and $f\in C[0,1]$ is such that
  $f,\,\wD f\in W^2_0(\varphi)[0,1]$ and $\wD^3 f\in L_{\infty}[0,1]$, then
  \begin{equation} \label{eq:4.1}
    \big\|\wU_n f - f + \lambda(n)\wD^2 f\big\| \le \theta(n)\,\big\|\wD^3 f\big\|.
  \end{equation}
\end{lemma}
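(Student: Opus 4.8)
The plan is to mimic the proof of Lemma~\ref{le:3.2}, but to carry the Taylor-type expansion one step further. Recall that in the proof of Lemma~\ref{le:3.2} we derived the telescoping identity
$$
  \wU_n f - f = - \sum_{k=n}^{\infty} \frac{1}{k^2(k+1)}\,\wD U_{k+1} \wD f
$$
for $f\in W^2_0(\varphi)[0,1]$ with $\wD f\in W^2(\varphi)[0,1]$. Under the stronger hypotheses of Lemma~\ref{le:4.1}, namely $f,\wD f\in W^2_0(\varphi)[0,1]$ and $\wD^3 f\in L_\infty[0,1]$, I would apply this same identity with $f$ replaced by $\wD f$ (which is legitimate since $\wD f\in W^2_0(\varphi)[0,1]$ and $\wD(\wD f)=\wD^2 f\in W^2(\varphi)[0,1]$), giving
$$
  \wU_m \wD f - \wD f = - \sum_{k=m}^{\infty} \frac{1}{k^2(k+1)}\,\wD U_{k+1} \wD^2 f ,
$$
and in particular $U_{k+1}\wD f - \wD f$ can be controlled. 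The goal is to rewrite each summand $\wD U_{k+1}\wD f$ in the first identity as $\wD^2 f$ plus a remainder, and then sum.

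The key step is the following. Write $\wD U_{k+1}\wD f = U_{k+1}\wD^2 f$ by Proposition~\ref{pr:2.2}(a). Then
$$
  U_{k+1}\wD^2 f = \wD^2 f + \big(U_{k+1}\wD^2 f - \wD^2 f\big),
$$
so substituting into \eqref{eq1},
$$
  \wU_n f - f = - \sum_{k=n}^{\infty} \frac{1}{k^2(k+1)}\,\wD^2 f
                - \sum_{k=n}^{\infty} \frac{1}{k^2(k+1)}\big(U_{k+1}\wD^2 f - \wD^2 f\big)
              = -\lambda(n)\,\wD^2 f + R_n,
$$
where $R_n := - \sum_{k=n}^{\infty} \frac{1}{k^2(k+1)}\big(U_{k+1}\wD^2 f - \wD^2 f\big)$. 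Hence $\wU_n f - f + \lambda(n)\wD^2 f = R_n$, and it remains to bound $\|R_n\|$ by $\theta(n)\|\wD^3 f\|$. For this I would use the first inequality in \eqref{eq:2.6} applied to the function $\wD f$ (not $f$): since $\wD f\in W^2(\varphi)[0,1]$, we have $\|U_{k+1}\wD f - \wD f\|\le \frac{1}{k+1}\|\wD^2 f\|$ — but that is the wrong order. Instead, the correct tool is to apply \eqref{eq:2.6} to $\wD^2 f$: $\|U_{k+1}\wD^2 f - \wD^2 f\|\le \frac{1}{k+1}\|\wD^3 f\|$, which holds because $\wD^2 f\in W^2(\varphi)[0,1]$ (this follows from $\wD f\in W^2_0(\varphi)[0,1]$ together with $\wD^3 f\in L_\infty$). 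Therefore
$$
  \|R_n\| \le \sum_{k=n}^{\infty} \frac{1}{k^2(k+1)}\cdot\frac{1}{k+1}\,\|\wD^3 f\|
          = \sum_{k=n}^{\infty} \frac{1}{k^2(k+1)^2}\,\|\wD^3 f\| = \theta(n)\,\|\wD^3 f\|,
$$
which is exactly \eqref{eq:4.1}.

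The main obstacle I anticipate is purely one of justifying that the relevant functions lie in the right smoothness classes so that the cited results (Proposition~\ref{pr:2.2}(a), the commutation $\wD U_{k+1}\wD f = U_{k+1}\wD^2 f$, and the estimate \eqref{eq:2.6} applied to $\wD^2 f$) are applicable: one needs $\wD^2 f\in W^2(\varphi)[0,1]$, i.e. that $\wD^2 f$ and $(\wD^2 f)'$ are in $AC_{loc}(0,1)$ and $\wD^3 f\in L_\infty[0,1]$, and the boundary vanishing of $\wD f$ at the endpoints to legitimize applying the telescoping identity to $\wD f$. These follow from the stated hypotheses $f,\wD f\in W^2_0(\varphi)[0,1]$, $\wD^3 f\in L_\infty[0,1]$, but should be spelled out. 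A secondary point is the convergence of the tail series and the interchange of limit $s\to\infty$ with the finite sums, which is handled exactly as in Lemma~\ref{le:3.2} using Proposition~\ref{pr:2.2}(f) and the absolute convergence guaranteed by $\lambda(n),\theta(n)<\infty$. Once these technicalities are in place, the estimate is immediate from Proposition~\ref{pr:2.4}.
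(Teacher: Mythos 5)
Your proof is correct and follows essentially the same route as the paper: both start from the telescoping identity \eqref{eq1} of Lemma~\ref{le:3.2}, rewrite $\wD U_{k+1}\wD f$ as $U_{k+1}\wD^2 f$ via Proposition~\ref{pr:2.2}(a), absorb $\lambda(n)\wD^2 f$ to leave the remainder $\sum_{k\ge n}\frac{\wD^2 f - U_{k+1}\wD^2 f}{k^2(k+1)}$, and bound each term by \eqref{eq:2.6} applied to $\wD^2 f$. The smoothness bookkeeping you flag is indeed the only point the paper glosses over, and your resolution of it matches the intended argument.
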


\begin{proof}
We have
$$
  \wU_n f - f + \lambda(n)\wD^2 f
   = - \sum_{k=n}^{\infty} \frac{U_{k+1} \wD^2 f}{k^2(k+1)} + \sum_{k=n}^{\infty} \frac{\wD^2 f}{k^2(k+1)}
   = \sum_{k=n}^{\infty} \frac{\wD^2 f - U_{k+1} \wD^2 f}{k^2(k+1)},
$$
see the proof of Lemma~\ref{le:3.2}, eq. \eqref{eq:3.3}. Then
$$
  \big\|\wU_n f - f + \lambda(n)\wD^2 f\big\|
  \le \sum_{k=n}^{\infty} \frac{1}{k^2(k+1)}\,\big\|\wD^2 f - U_{k+1} \wD^2 f\big\|.
$$
Using \eqref{eq:2.6} with $\wD^2 f$ instead of $f$ we obtain
$$
  \big\|\wU_n f - f + \lambda(n)\wD^2 f\big\|
    \le \sum_{k=n}^{\infty} \frac{1}{k^2(k+1)}\cdot\frac{1}{(k+1)}\,\big\|\wD \wD^2 f\big\|
    = \theta(n)\,\big\|\wD^3 f\big\|.
$$
\end{proof}

We need an inequality of Bernstein type.

\begin{lemma} \label{le:4.2}
  Let $n\in\mathbb{N}$, $n\ge 2$ and $f\in C[0,1]$. Then the following inequality holds true
  \begin{equation} \label{bieq:4.1}
    \|\wD \wU_n f\| \le \wC\, n\|f\|,
  \end{equation}
  where $\wC=6.5+\sqrt{6} $.
\end{lemma}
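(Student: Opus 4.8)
The plan is to bound $\|\wD\wU_n f\|$ by writing $\wD\wU_n f$ in a form where the Cauchy–Schwarz inequality and Proposition~\ref{pr:2.3}\,(b) can be applied, mirroring the proof of Lemma~\ref{le:3.1}. Starting from
$$
  \wU_n f(x) = \sum_{k=0}^n u_{n,k}(f)\,\Big(1 - \tfrac1n T_{n,k}(x)\Big)P_{n,k}(x),
$$
I would apply $\wD = \varphi\,\frac{d^2}{dx^2}$ term by term. Since $\wD P_{n,k} = T_{n,k}P_{n,k}$ by \eqref{eq:2.10}, the product rule gives an expression for $\wD\big[(1-\tfrac1n T_{n,k})P_{n,k}\big]$ involving $T_{n,k}$, $T_{n,k}'$, $T_{n,k}''$, $\varphi$, $\varphi'$, $P_{n,k}$ and $P_{n,k}'$. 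Using \eqref{eq:2.4}, \eqref{eq:2.8}, \eqref{eq:2.9} and the explicit formulas \eqref{eq:2.7}–\eqref{eq:2.9}, I would simplify the coefficient of $P_{n,k}(x)$ (after also rewriting $P_{n,k}'$ via \eqref{eq:2.4} and shifting indices, or else keeping everything over the common basis $P_{n,k}$) into a single rational function $R_{n,k}(x)$, so that $\wD\wU_n f(x) = \sum_{k=0}^n u_{n,k}(f)\,R_{n,k}(x)$.

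Then, exactly as in Lemma~\ref{le:3.1}, I would estimate
$$
  |\wD\wU_n f(x)| \le \|f\|\sum_{k=0}^n |R_{n,k}(x)|\,?
$$
— more carefully, since $R_{n,k}$ already absorbs a factor of $P_{n,k}$ or its derivative, I would arrange the sum as $\sum_k |u_{n,k}(f)|\,|\widetilde R_{n,k}(x)|P_{n,k}(x)$ with $\widetilde R_{n,k}$ a rational function, then Cauchy–Schwarz in the form
$$
  |\wD\wU_n f(x)| \le \|f\|\,\sqrt{\sum_{k=0}^n \widetilde R_{n,k}(x)^2 P_{n,k}(x)}\,\cdot 1.
$$
The remaining task is to bound $\sum_{k=0}^n \widetilde R_{n,k}(x)^2 P_{n,k}(x) \le (\wC n)^2$ uniformly in $x\in[0,1]$ and $n\ge 2$. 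Expanding $\widetilde R_{n,k}$ as a polynomial in $\big(\tfrac kn - x\big)$ with coefficients in $\varphi(x)$, $(1-2x)$, $n$ — just as $T_{n,k}$ is written in \eqref{eq:2.7} — reduces everything to the Bernstein central moments $\mu_{n,i}(x)$, $i\le$ some small number, which are listed in Proposition~\ref{pr:2.1}\,(b); the higher moments $\mu_{n,5},\mu_{n,6}$ may also appear and would need to be recalled or computed (they are $O(\varphi^2/n^3)$ and $O(\varphi^3/n^3)$ respectively), but crucially every such term, after division by the powers of $\varphi$ coming from $T_{n,k}'$, $T_{n,k}''$ (which carry $1/x^2$, $1/x^3$, etc.), must combine so that the net power of $\varphi$ in the denominator is non-positive and the whole sum is $O(n^2)$.

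The main obstacle is precisely this last point: controlling the near-endpoint behaviour. The derivatives $T_{n,k}'$ and $T_{n,k}''$ blow up like $1/x^2$ and $1/x^3$ near $x=0$ (and symmetrically near $x=1$), so naively $\widetilde R_{n,k}(x)^2$ contains terms as singular as $1/\varphi(x)^6$; these must be cancelled by the vanishing of the factors $k(k-1)$, $(n-k)(n-k-1)$ at the relevant $k$ and by the decay of $P_{n,k}(x)$ near the endpoints, which in the moment language means the factors $\big(\tfrac kn-x\big)^j$ supply compensating powers of $\varphi(x)$. Tracking this cancellation exactly, and keeping the constant down to $\wC = 6.5 + \sqrt6$, is the delicate computational heart of the proof; I expect the bookkeeping to be organized by first establishing an identity of the shape $\sum_k \widetilde R_{n,k}(x)^2 P_{n,k}(x) = a_0 n^2 + a_1 n + a_2 + (\text{terms with }1/n)$ with explicit $x$-dependent coefficients $a_i$ that are bounded on $[0,1]$, then maximizing. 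A symmetry reduction $x \mapsto 1-x$, $k\mapsto n-k$ (under which $T_{n,k}\mapsto T_{n,n-k}$ and the roles of the two singular terms swap) should halve the work and let me assume $x\le \tfrac12$ throughout the endpoint analysis.
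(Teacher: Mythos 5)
Your reduction of everything to a single sum $\sum_{k=0}^n \widetilde R_{n,k}(x)^2P_{n,k}(x)$ via a global Cauchy--Schwarz step contains a fatal flaw, and it is exactly at the point you flag as ``the delicate computational heart'': the uniform bound $\sum_k \widetilde R_{n,k}(x)^2P_{n,k}(x)\le(\wC n)^2$ that your argument requires is false, because this sum is unbounded on $(0,1)$. Since $\widetilde R_{n,k}P_{n,k}=\wD\wP_{n,k}=\varphi\,\wP_{n,k}''$ is a polynomial, one has $\widetilde R_{n,k}^2P_{n,k}=(\varphi\,\wP_{n,k}'')^2/P_{n,k}$, and a count of zero orders at $x=0$ kills the claim already for $k=3$: near $0$, $P_{n,3}\sim\binom n3x^3$ while $\wP_{n,3}=P_{n,3}-\tfrac1n\varphi P_{n,3}''\sim-\tfrac{6}{n}\binom n3x^2$, so $\varphi\,\wP_{n,3}''\sim-\tfrac{12}{n}\binom n3 x$ and
\begin{equation*}
  \widetilde R_{n,3}(x)^2P_{n,3}(x)\sim\frac{144\binom n3}{n^2}\cdot\frac1x\longrightarrow\infty
  \qquad(x\to0^+).
\end{equation*}
The cancellations you hope for cannot rescue this: after squaring, all terms of the sum over $k$ are nonnegative, so no cross-cancellation is available, and the partial cancellation of the worst singularities inside a single $\widetilde R_{n,k}$ (which does occur) still leaves a residue too singular relative to the order of vanishing of $P_{n,k}$ when $k=3$ (and $k=n-3$). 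Equivalently, $\sum_k|\wD\wP_{n,k}(x)|$ is $O(n)$ but its Cauchy--Schwarz majorant is not; the inequality is simply too lossy when applied to the whole expression. A symptom you could have checked: your sum contains the piece $\tfrac1{n^2}\sum_kT_{n,k}(x)^4P_{n,k}(x)$, and already $T_{n,3}(x)^4P_{n,3}(x)\sim 6^4\binom n3/x$ diverges at the origin, whereas the second-moment identity $\Phi(\alpha)=\alpha^2+2-\tfrac2n$ of Proposition~\ref{pr:2.3}(b) is bounded only because all $1/\varphi$ terms cancel at the level of \emph{second} powers of $T_{n,k}$.

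The paper avoids this trap by never squaring the singular parts. It splits $\wD\wP_{n,k}$ into three pieces and treats each by a different mechanism: the $T_{n,k}''$ piece is summed \emph{exactly} (absolute values cost nothing since $T_{n,k}''>0$, and the $1/x^3$, $1/(1-x)^3$ singularities cancel against the identities \eqref{eq:2.2}--\eqref{eq:2.3} to give exactly $2(n-1)$); the $T_{n,k}'P_{n,k}'$ piece is handled by a sign analysis showing the absolute-value sum equals the signed sum $4(n-1)$ except on short intervals $(\xi_k,k/n)$, where the local excess is bounded by $n/2$; and only the remaining piece $(1-\tfrac1nT_{n,k})T_{n,k}P_{n,k}$ is estimated by Cauchy--Schwarz, in the factored form $\sqrt{\Phi(0)n^2}\cdot\sqrt{\Phi(1)}$ that requires nothing beyond the bounded second-moment identities. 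If you want to repair your argument, you would need to abandon the single global Cauchy--Schwarz step and introduce some such case-by-case treatment of the singular terms.
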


\begin{proof}
Since
$$
  \big|\wD\wU_n(f,x)\big| \le \sum_{k=0}^n |u_{n,k}(f)|\,\big|\wD\wP_{n,k}(x)\big|
                              \le \|f\| \sum_{k=0}^n \big|\wD\wP_{n,k}(x)\big|,
$$
it is sufficient to find an upper estimate for the quantity
$$
  \sum_{k=0}^n \big|\wD \wP_{n,k}(x)\big| = \sum_{k=0}^n \big|\varphi(x)\wP_{n,k}''(x)\big|.
$$

Remind that, according to \eqref{eq:2.10}, we have the relation
$$
  \wD P_{n,k}(x) = \varphi(x)P_{n,k}''(x) = T_{n,k}(x)P_{n,k}(x).
$$
Hence
\begin{gather*}
  \wP_{n,k}(x) = P_{n,k}(x) - \frac{1}{n}\,\wD P_{n,k}(x)
               = \Big(1 - \frac{1}{n}\,T_{n,k}(x)\!\Big)P_{n,k}(x), \\
  \wP_{n,k}''(x) = \Big(1 - \frac{1}{n}\,T_{n,k}(x)\!\Big)'' P_{n,k}(x)
                   + 2\Big(1 - \frac{1}{n}\,T_{n,k}(x)\!\Big)' P_{n,k}'(x)
                   + \Big(1 - \frac{1}{n}\,T_{n,k}(x)\!\Big) P_{n,k}''(x),
\end{gather*}
Then,
\begin{align*}
  \wD \wP_{n,k}(x) & = \varphi(x)\wP_{n,k}''(x) \\
                   & = - \frac{\varphi(x)}{n}\,T_{n,k}''(x) P_{n,k}(x)
                       - \frac{2\varphi(x)}{n}\,T_{n,k}'(x) P_{n,k}'(x)
                       + \Big(1 - \frac{1}{n}\,T_{n,k}(x)\!\Big) \varphi(x) P_{n,k}''(x) \\
                   & = - \frac{\varphi(x)}{n}\,T_{n,k}''(x) P_{n,k}(x)
                       - \frac{2\varphi(x)}{n}\,T_{n,k}'(x) P_{n,k}'(x)
                       + \!\Big(1 - \frac{1}{n}\,T_{n,k}(x)\!\Big) T_{n,k}(x) P_{n,k}(x).
\end{align*}
Therefore
\begin{align*}
  \sum_{k=0}^n \big|\wD \wP_{n,k}(x)\big| & \le a_n(x) + b_n(x) + c_n(x), \\
  \intertext{where}
  a_n(x) & = \frac{\varphi(x)}{n} \sum_{k=0}^n \big|T_{n,k}''(x)\big| P_{n,k}(x), \\
  b_n(x) & = \frac{2\varphi(x)}{n} \sum_{k=0}^n \big|T_{n,k}'(x) P_{n,k}'(x)\big|, \\
  c_n(x) & = \sum_{k=0}^n \Big|\Big(1 - \frac{1}{n}\,T_{n,k}(x)\Big) T_{n,k}(x)\Big| P_{n,k}(x).
\end{align*}

\smallskip
\underline{\sl 1. Estimate for $a_n(x)$.} From \eqref{eq:2.9} and \eqref{eq:2.2}--\eqref{eq:2.3},
\begin{align*}
  \sum_{k=0}^n T_{n,k}''(x) P_{n,k}(x)
    & = \sum_{k=0}^n \Big(\frac{2k(k-1)}{x^3} + \frac{2(n-k)(n-k-1)}{(1-x)^3}\Big) P_{n,k}(x) \\
    & = \frac{2}{x^3} \sum_{k=0}^n k(k-1)P_{n,k}(x) + \frac{2}{(1-x)^3} \sum_{k=0}^n (n-k)(n-k-1)P_{n,k}(x) \\
    & = \frac{2}{x^3}\,n(n-1)x^2 + \frac{2}{(1-x)^3}\,n(n-1)(1-x)^2 \\
    & = \frac{2n(n-1)}{\varphi(x)}.
\end{align*}
Having in mind $T_{n,k}''(x)>0$ in \eqref{eq:2.9}, we obtain
\begin{equation} \label{eq:4.3}
  a_n(x) = \frac{\varphi(x)}{n}\sum_{k=0}^n T_{n,k}''(x) P_{n,k}(x) = 2(n-1).
\end{equation}

\smallskip
\underline{\sl 2. Estimate for $b_n(x)$.} \ Observe that
$$
  \sum_{k=0}^n \big|T_{n,k}'(x)\,P_{n,k}'(x)\big| = \sum_{k=0}^n \big|T_{n,k}'(1-x)\,P_{n,k}'(1-x)\big|,
$$
hence, there is a symmetry of the function $b_n(x)$ in $x=\frac12$. Therefore, it is sufficient to
estimate $b_n(x)$ for $x\in\big[0,\frac12\big]$.

We will show that in $\big[0,\frac12\big]$ the function $b_n(x)$ has exactly
$\big\lfloor \frac{n-1}{2}\big\rfloor$ local extrema $h_k$ attained at points in intervals
$\big(\frac{k-1}{n},\frac{k}{n}\big]$, $k=1,\ldots,\big\lfloor \frac{n-1}{2}\big\rfloor$,
respectively. We will estimate all the local maxima $h_k$ and then an estimate for $b_n(x)$
will follow immediately.

\smallskip
\begin{enumerate}
  \item[(i)]   First, we prove that
               $$
                 S(x) := \frac{-2\varphi(x)}{n}\sum_{k=0}^n T_{n,k}'(x) P_{n,k}'(x) = 4(n-1).
               $$

               From \eqref{eq:2.4} and \eqref{eq:2.8},
               $$
                 \sum_{k=0}^n T_{n,k}'(x) P_{n,k}'(x) =
                 n\sum_{k=0}^{n-1} \big(T_{n,k+1}'(x)-T_{n,k}'(x)\big) P_{n-1,k}(x).
               $$
               Since
               \begin{align*}
                 T_{n,k+1}'(x) - T_{n,k}'(x)
                   & \!=\! \frac{(n-k-1)(n-k-2)}{(1-x)^2} - \frac{(k+1)k}{x^2}
                           + \frac{k(k-1)}{x^2} - \frac{(n-k)(n-k-1)}{(1-x)^2} \\
                   & \!=\! - \frac{2k}{x^2} - \frac{2(n-k-1)}{(1-x)^2},
               \end{align*}
               using \eqref{eq:2.1} we get
               \begin{align*}
                 \sum_{k=0}^{n} T_{n,k}'(x) P_{n,k}'(x)
                   & = - \frac{2n}{x^2} \sum_{k=0}^{n-1} k P_{n-1,k}(x)
                       - \frac{2n}{(1-x)^2} \sum_{k=0}^{n-1} (n-k-1) P_{n-1,k}(x) \\
                   & = - \frac{2n}{x^2}\,(n-1)x - \frac{2n}{(1-x)^2}\,(n-1)(1-x) \\
                   & = - \frac{2n(n-1)}{\varphi(x)}.
               \end{align*}
               Therefore,
               \begin{equation} \label{eq:4.4}
                 S(x) = \frac{-2\varphi(x)}{n}\cdot\frac{-2n(n-1)}{\varphi(x)} = 4(n-1).
               \end{equation}

  \smallskip
  \item[(ii)]  By \eqref{eq:2.9}, $T_{n,k}''(x)>0$, hence $-T_{n,k}'(x)$ strictly decreases for
               $x\in (0,1)$.

               \smallskip
               For $k=0,1$ we have $-T_{n,k}'(0^+)<0$, then $-T_{n,k}'(x)<0$, $x\in (0,1)$, and
               $\varphi(x)T_{n,1}'(x)$ has its only zero in $[0,1)$ at $\xi_1=0$.

               \smallskip
               For $k=2,\ldots,n-2$, we have $-T_{n,k}'(0^+)>0$, and $T_{n,k}'(x)$ has a unique
               simple zero at
               $$
                 \xi_k = \tfrac{\sqrt{\binom{k}{2}}}{\sqrt{\binom{k}{2}}+\sqrt{\binom{n-k}{2}}}
                 \in \big(\tfrac{k-1}{n},\tfrac{k}{n}\big).
               $$

               For $k=n-1,n$, we have $-T_{n,k}'(x)>0$ for $x\in (0,1)$, and
               $-\varphi(x)T_{n,n}'(x)=0$ only for $\xi_n=1$ in $(0,1]$.

  \smallskip
  \item[(iii)] For the Bernstein basis polynomials in $(0,1)$ we have

               \smallskip
               $P_{n,0}'(x)=-n(1-x)^{n-1}<0$,

               \smallskip
               $P_{n,k}'(x)=n\binom{n}{k}x^{k-1}(1-x)^{n-k-1}\big(\frac{k}{n}-x\big)$, \ and
               $P_{n,k}'(x)=0$ \ only if \ $x=\frac{k}{n}$,

               \smallskip
               $P_{n,n}'(x)=n x^{n-1}>0$.

  \smallskip
  \item[(iv)]  Now, from (ii) and (iii) for $x\in(0,1)$,

               \smallskip
               $-\varphi(x)T_{n,0}'(x)\,P_{n,0}'(x)>0$,

               \smallskip
               $-\varphi(x)T_{n,1}'(x)\,P_{n,1}'(x)>0$ for $x\in\big(\xi_1,\frac{1}{n}\big)=\big(0,\frac{1}{n}\big)$,

               \smallskip
               $-\varphi(x)T_{n,k}'(x)\,P_{n,k}'(x)>0$ for $x\in\big(\frac{k-1}{n},\xi_k\big)$, \
               $k=2,\ldots,\lfloor\frac{n-1}{2}\rfloor$,

               $-\varphi(x)T_{n,k}'(x)\,P_{n,k}'(x)<0$ for $x\in\big(\xi_k,\frac{k}{n}\big)$, \
               $k=2,\ldots,\lfloor\frac{n-1}{2}\rfloor$,

               \smallskip
               $-\varphi(x)T_{n,n}'(x)\,P_{n,n}'(x)>0$,

  \smallskip
  \item[(v)]   From the observations in (ii)--(iv) it follows that
               $$
                 -\varphi(x)T_{n,k}'(x)\,P_{n,k}'(x) > 0, \qquad k=0,\ldots,n
               $$
               except
               \begin{alignat*}{3}
                 & -\varphi(x)T_{n,k}'(x)\,P_{n,k}'(x) < 0, \quad && x\in \big(\xi_k,\tfrac{k}{n}\big),
                   && \quad k=1,\ldots,\big\lfloor\tfrac{n-1}{2}\big\rfloor, \\
                 & -\varphi(x)T_{n,n-k}'(x)\,P_{n,n-k}'(x) < 0, \quad && x\in \big(\tfrac{n-k}{n},\xi_{n-k}\big),
                   && \quad k=1,\ldots,\big\lfloor\tfrac{n-1}{2}\big\rfloor.
               \end{alignat*}
               Hence,
               $$
                 \sum_{k=0}^n \Big|\frac{-2\varphi(x)T_{n,k}'(x)}{n}\,P_{n,k}'(x)\Big| = S(x) = 4(n-1),
                 \quad x\in \big[0,\tfrac12\big]\setminus \textstyle{\bigcup\limits_{k=1}^{\lfloor\frac{n-1}{2}\rfloor}} \,(\xi_k,\tfrac{k}{n}).
               $$
               Therefore, for $k=1,\ldots,\big\lfloor\frac{n-1}{2}\big\rfloor$,
               \begin{equation} \label{eq:4.5}
                 b_n(x) =
                 \begin{cases}
                   4(n-1), & \ x\in \big[\frac{k-1}{n},\xi_k\big], \smallskip\\
                   4(n-1) + \dfrac{2\varphi(x)}{n}\big|T_{n,k}'(x) P_{n,k}'(x)\big|,  & \ x\in \big[\xi_k,\frac{k}{n}].
                 \end{cases}
               \end{equation}

              Moreover,
              $$
                b_n(x) = 4(n-1), \qquad x\in\big[\tfrac{n-2}{2n},\tfrac{n+2}{2n}\big], \ n \ \text{even},
                \ \text{and} \ x\in\big[\tfrac{n-1}{2n},\tfrac{n+1}{2n}\big], \ n \ \text{odd}.
              $$

  \smallskip
  \item[(vi)] This means that we have to estimate the maxima of the functions
              $$
                s_k(x) := \bigg|\frac{-2\varphi(x)T_{n,k}'(x)}{n}\,P_{n,k}'(x)\bigg|, \qquad
                x\in \big[\xi_k,\tfrac{k}{n}\big], \quad k=1,\ldots,\big\lfloor\tfrac{n-1}{2}\big\rfloor.
              $$
              By (iv) for $k=1$ we have:
              $$
                s_1(x) = \frac{-2\varphi(x)T_{n,1}'(x)}{n}\,P_{n,1}'(x)
                       = 2n(n-1)(n-2)x\Big(\frac{1}{n}-x\Big)(1-x)^{n-3}.
              $$
              Since
              $$
                \max_{x\in [0,1/n]} x\Big(\frac{1}{n}-x\Big) = \frac{1}{4n^2} \quad\text{and}\quad
                (1-x)^{n-3} \le 1,
              $$
              we obtain
              \begin{equation} \label{eq:4.6}
                h_1 := \max_{x\in[0,1/n]} s_1(x) \le \frac{2n(n-1)(n-2)}{4n^2}  \le \frac{n}{2}.
              \end{equation}

              Let us fix $k\in \big\{2,\ldots,\big\lfloor\tfrac{n-1}{2}\big\rfloor\big\}$. We
              estimate the local extremum
              $$
                h_k := \max_{x\in [\xi_k,k/n]} s_k(x).
              $$
              According to (iv) we have
              $$
                s_k(x) = \frac{2\varphi(x)}{n}\, T_{n,k}'(x) P_{n,k}'(x)
                       = \frac{2\varphi(x)}{n}\, T_{n,k}'(x) \binom{n}{k} x^{k-1} (1-x)^{n-k-1}\Big(\frac{k}{n}-x\Big),
              $$
              i.e.
              \begin{equation} \label{eq:4.7}
                s_k(x) = \frac{2}{n}\, T_{n,k}'(x) P_{n,k}(x) \Big(\frac{k}{n}-x\Big).
              \end{equation}
              Function $T_{n,k}'(x)$ is strictly increasing in $\big[\frac{k-1}{n},\frac{k}{n}\big]$ and
              change sign only at point $\xi_k=\frac{\sqrt{\binom{k}{2}}}{\sqrt{\binom{k}{2}}+\sqrt{\binom{n-k}{2}}}$.
              Then, for $x\in \big[\xi_k,\frac{k}{n}\big]$,
              $$
                \max_{x\in [\xi_k,k/n]} \,T_{n,k}'(x)
                  = T_{n,k}'\big(\tfrac{k}{n}\big)
                  = - \frac{k(k-1)n^2}{k^2} + \frac{(n-k)(n-k-1)n^2}{(n-k)^2}
                  = n^2 \Big(\frac{1}{k} - \frac{1}{n-k}\Big).
              $$
              The function $h(x)=\frac{1}{x}-\frac{1}{n-x}$ is decreasing in $\big(0,\frac{n}{2}\big)$
              since $h'(x)=\big(\frac{1}{x}-\frac{1}{n-x})'<0$, hence for
              $k\in \big\{2,\ldots,\big\lfloor\tfrac{n-1}{2}\big\rfloor\big\}$
              \begin{equation} \label{eq:4.8}
                T_{n,k}'(x) \le n^2 \Big(\frac{1}{k} - \frac{1}{n-k}\Big)
                            \le n^2 \Big(\frac12 - \frac{1}{n-2}\Big)
                            \le \frac{n^2}{2}.
              \end{equation}

              Also, $\frac{k-1}{n}\le \xi_k\le \frac{k}{n}$ and for $x\in \big[\xi_k,\frac{k}{n}\big]$
              we have $\frac{k}{n} - x \le \frac{1}{n}$. Since $0\le P_{n,k}(x)\le 1$ in $[0,1]$,
              it follows from \eqref{eq:4.7} and \eqref{eq:4.8} that
              $$
                h_k \le \frac{2}{n}\cdot\frac{n^2}{2}\cdot\frac{1}{n} \le 1.
              $$

              Taking into account \eqref{eq:4.6}, for $n\ge 2$ we have
              \begin{equation} \label{eq:4.9}
                h_k\le h_1 \le \frac{n}{2}, \qquad k=1,\ldots,\big\lfloor \frac{n-1}{2}\big\rfloor.
              \end{equation}
              Finally, for $b_n(x)$, using \eqref{eq:4.5} and \eqref{eq:4.9}, we obtain the estimate
              $$
                b_n(x) \le 4(n-1) + \max_{1\le k\le \big\lfloor \frac{n-1}{2}\big\rfloor} h_k
                       \le 4(n-1) + \frac{n}{2},
              $$
              or
              \begin{equation} \label{eq:4.10}
                b_n(x) \le 4.5\,n, \qquad x\in [0,1].
              \end{equation}
\end{enumerate}

\smallskip
\underline{\sl 3. Estimation of $c_n(x)$.} \ We apply Cauchy inequality and
Proposition~\ref{pr:2.3}\,(b) with $\alpha=0$ and $\alpha=1$:
\begin{align*}
  c_n(x) & = \sum_{k=0}^n \Big|T_{n,k}(x)\Big(1 - \frac{1}{n}\,T_{n,k}(x)\Big)\Big| P_{n,k}(x) \\
         & \le \sqrt{\sum_{k=0}^n T_{n,k}^2(x) P_{n,k}(x)} \sqrt{\sum_{k=0}^n \Big(1-\frac{1}{n}\,T_{n,k}(x)\Big)^2 P_{n,k}(x)} \\
         & = \sqrt{\Phi(0)n^2}\cdot\sqrt{\Phi(1)} = n\sqrt{2-\frac{2}{n}}\cdot\sqrt{3-\frac{2}{n}}.
\end{align*}
Then,
\begin{equation} \label{eq:4.11}
  c_n(x) \le \sqrt{6}\,n, \qquad x\in [0,1].
\end{equation}

From \eqref{eq:4.3}, \eqref{eq:4.10} and \eqref{eq:4.11} we obtain
$$
  \sum_{k=0}^n \big|\wD \wP_{n,k}(x)\big| \le a_n(x) + b_n(x) + c_n(x) \le 2(n-1) + 4.5\,n + \sqrt{6}\,n
  \le \big(6.5+\sqrt{6}\big)n.
$$
Therefore
$$
  \big\|\wD\wU_n f\big\| \le \wC n\|f\|, \qquad \wC := 6.5+\sqrt{6} .
$$
\end{proof}

\smallskip
Now we are ready to prove a strong converse inequality of Type B accordind to Ditzian-Ivanov
classification.

\smallskip
\begin{proof}[Proof of Theorem~\ref{th:1.2}]
We follow the approach of Ditzian and Ivanov~\cite{DiIv1993}.

Let $n\in\mathbb{N}$, $n\ge 2$, $f\in C[0,1]$ and $\lambda(n)$, $\theta(n)$ be defined as in
Proposition~\ref{pr:2.4}. From the Voronovskaya type inequality in Lemma~\ref{le:4.1} for the
operator $\wU_{\ell}$ and function $\wD^2 \wU_n^2 f$ instead of $f$ we have
\begin{align*}
  \lambda(\ell) \big\|\wD^2 \wU_n^3 f\big\|
    & = \big\|\lambda(\ell)\wD^2 \wU_n^3 f\big\| \\
    & = \big\|\wU_{\ell} \wU_n^3 f - \wU_n^3 f +\lambda(\ell)\wD^2 \wU_n^3 f
        - \wU_{\ell} \wU_n^3 f + \wU_n^3 f\big\| \\
    & \le \big\|\wU_{\ell} \wU_n^3 f - \wU_n^3 f +\lambda(\ell)\wD^2 \wU_n^3 f\big\|
          + \big\|\wU_{\ell} \wU_n^3 f - \wU_n^3 f\big\| \\
    & \le \theta(\ell) \big\|\wD^3 \wU_n^3 f\big\| + \big\|\wU_n^3\big(\wU_{\ell} f - f\big)\big\|.
\end{align*}
Now, using Lemma~\ref{le:4.2} for the function $\wD^2 \wU_n^2 f$ and in addition Lemma~\ref{le:3.1}
repeatedly three times we obtain
\begin{align*}
  \lambda(\ell) \big\|\wD^2 \wU_n^3 f\big\|
    & \le \wC\,n\,\theta(\ell) \big\|\wD^2 \wU_n^2 f\big\| + 3\sqrt{3}\,\big\|\wU_{\ell}f - f\big\| \\
    & = \wC\,n\,\theta(\ell) \big\|\wD^2 \wU_n^2 (f - \wU_n f) + \wD^2 \wU_n^3 f\big\|
        + 3\sqrt{3}\,\big\|\wU_{\ell}f - f\big\| \\
    & \le \wC\,n\,\theta(\ell) \big\|\wD^2 \wU_n^2 (f - \wU_n f)\big\|
          + \wC\,n\,\theta(\ell) \big\|\wD^2 \wU_n^3 f\big\| + 3\sqrt{3}\,\big\|\wU_{\ell}f - f\big\|.
\end{align*}
Applying the Bernstein type inequality Lemma~\ref{le:4.2} twice for $f-\wU_n f$ yields
$$
  \lambda(\ell) \big\|\wD^2 \wU_n^3 f\big\|
  \le \wC^3 n^3\theta(\ell) \big\|f - \wU_n f\big\|
      + 3\sqrt{3}\,\big\|\wU_{\ell} - f\big\| + \wC\,n\,\theta(\ell) \big\|\wD^2 \wU_n^3 f\big\|.
$$
From inequalities \eqref{eq:2.11} and \eqref{eq:2.12} of Proposition~\ref{pr:2.4} we get
$$
  \frac{1}{2\ell^2} \big\|\wD^2 \wU_n^3 f\big\|
  \le \frac{4\wC^3 n^3}{9\ell^3} \big\|f - \wU_n f\big\|
      + 3\sqrt{3}\,\big\|\wU_{\ell} - f\big\| + \frac{4\wC n}{9\ell^3} \big\|\wD^2 \wU_n^3 f\big\|.
$$
Let us choose $\ell$ sufficiently large such that
$$
  \frac{4\wC n}{9\ell^3} \le \frac{1}{4\ell^2}, \qquad\text{i.e.}\qquad \ell \ge \frac{16\wC}{9}\,n.
$$
If we set $L=\frac{16\wC}{9}$, for all integers $\ell\ge Ln$ we have
\begin{gather}
  \frac{1}{2\ell^2} \big\|\wD^2 \wU_n^3 f\big\|
    \le \frac{4\wC^3 n^3}{9\ell^3} \big\|f - \wU_n f\big\|
        + 3\sqrt{3}\,\big\|\wU_{\ell} - f\big\| + \frac{1}{4\ell^2} \big\|\wD^2 \wU_n^3 f\big\|, \notag\\
  \frac{1}{4\ell^2} \big\|\wD^2 \wU_n^3 f\big\|
    \le \frac{4\wC^3 n^3}{9\ell^3} \big\|f - \wU_n f\big\| + 3\sqrt{3}\,\big\|\wU_{\ell} - f\big\|, \notag\\
  \frac{1}{n^2} \big\|\wD^2 \wU_n^3 f\big\|
    \le \wC^2\,\big\|f - \wU_n f\big\|
        + 12\sqrt{3}\,\frac{\ell^2}{n^2}\,\big\|\wU_{\ell} - f\big\|. \label{eq:4.12}
\end{gather}
By using Lemma~\ref{le:3.1},
\begin{align*}
  \big\|f-\wU_n^3 f\big\|
  & \le \big\|f - \wU_n f\big\| + \big\|\wU_n f - \wU_n^2 f\big\| + \big\|\wU_n^2 f - \wU_n^3 f\big\| \\
  & \le (1 + \sqrt{3} + (\sqrt{3})^2)\big\|f - \wU_n f\big\|,
\end{align*}
and we obtain the inequality
\begin{equation} \label{eq:4.13}
  \big\|f - \wU_n^3 f\big\| \le (4+\sqrt{3})\big\|f - \wU_n f\big\|.
\end{equation}

It remains to complete the estimation of the K-functional. Since
$\wU_n^3 f\in W^2_0(\varphi)[0,1]$, from \eqref{eq:4.12} and \eqref{eq:4.13} it
follows
\begin{align*}
  K\Big(f,\frac{1}{n^2}\Big)
    & = \inf \Big\{\|f-g\| + \frac{1}{n^2}\,\big\|\wD^2 g\big\|: \ g\in W^2_0(\varphi)[0,1],\,\wD g\in W^2(\varphi)[0,1] \Big\} \\
    & \le \big\|f-\wU_n^3 f\| + \frac{1}{n^2}\,\big\|\wD^2 \wU_n^3 f\big\| \\
    & \le \Big(4 + \sqrt{3} + \wC^2\Big)\big\|\wU_n f-f\big\|
          + 12\sqrt{3}\,\frac{\ell^2}{n^2}\,\big\|\wU_{\ell} f-f\big\|.
\end{align*}

Therefore,
$$
  K\Big(f,\frac{1}{n^2}\Big) \le
  C\,\frac{\ell^2}{n^2}\big(\big\|\wU_n f - f \big\| + \big\|\wU_{\ell} f - f) \big\| \big)
$$
for all $\ell\ge Ln$, where $C=4+\sqrt{3}+\wC^2$ and $L=\frac{16\wC}{9}$, $\wC=6.5+\sqrt{6}$.
\end{proof}


\bigskip

\end{document}